\documentclass [a4paper,12pt]{amsart}
\usepackage{graphicx}
\usepackage[ansinew]{inputenc}    
\usepackage[all]{xy}

\usepackage{amssymb,amscd}

\setlength{\topmargin}{-1.0cm} \setlength{\textwidth}{16cm}
\setlength{\textheight}{23.5cm} \setlength{\oddsidemargin}{0cm}
\setlength{\evensidemargin}{0cm} \setlength{\footskip}{1cm}
\setlength{\headsep}{0.8cm}

\newtheorem{thm}{Theorem}[section]

\def\C{\mathbb C}

\def\dim{\operatorname{dim}}

\def\rank{\operatorname{rank}}

\usepackage{graphicx}
\usepackage[ansinew]{inputenc}    
\usepackage[all]{xy}
\usepackage{amssymb,amscd}
\usepackage{color}

\setlength{\topmargin}{-1.0cm} \setlength{\textwidth}{16cm}
\setlength{\textheight}{23.5cm} \setlength{\oddsidemargin}{0cm}
\setlength{\evensidemargin}{0cm} \setlength{\footskip}{1cm}
\setlength{\headsep}{0.8cm}

\newtheorem{cor}[thm]{Corollary}
\newtheorem{teo}[thm]{Theorem}
\newtheorem{lem}[thm]{Lemma}

\theoremstyle{definition}

\newtheorem{defi}[thm]{Definition}

\hyphenation{sin-gu-la-ri-da-de} \hyphenation{ge-ne-ra-li-za-da}

\def\C{\mathbb C}

\def\dim{\operatorname{dim}}

\def\rank{\operatorname{rank}}

\begin{document}
\title[Equisingularity of families of functions on IDS]{Equisingularity of families of functions on isolated determinantal singularities}

\author{R. S. Carvalho, J. J. Nuño-Ballesteros, B. Oréfice-Okamoto, J. N. Tomazella}

\address{Departamento de Matem\'atica, Universidade Federal de S\~ao Carlos, Caixa Postal 676,
13560-905, S\~ao Carlos, SP, BRAZIL}

\email{rafaelasoares@dm.ufscar.br}

\address{Departament de Matemàtiques, Universitat de València, Campus de Burjassot, 46100 Burjassot SPAIN}

\email{Juan.Nuno@uv.es}

\address{Departamento de Matem\'atica, Universidade Federal de S\~ao Carlos, Caixa Postal 676,
13560-905, S\~ao Carlos, SP, BRAZIL}

\email{bruna@dm.ufscar.br}

\address{Departamento de Matem\'atica, Universidade Federal de S\~ao Carlos, Caixa Postal 676,
13560-905, S\~ao Carlos, SP, BRAZIL}

\email{tomazella@dm.ufscar.br}

\thanks{The first author was partially supported by CAPES.
The second author was partially supported by by MICINN Grant PGC2018--094889--B--I00 and by GVA Grant AICO/2019/024. The third author was partially supported by FAPESP Grant
2016/25730-0. The fourth author was partially supported by CNPq Grant
309086/2017-5 and FAPESP Grant 2018/22090-5.}

\subjclass[2000]{Primary 32S15; Secondary 32S30, 58K60} \keywords{Isolated determinantal singularities, polar multiplicities, Whitney equisingularity}

\begin{abstract} We study the equisingularity of a family of function germs $\{f_t\colon(X_t,0)\to (\mathbb{C},0)\}$, where $(X_t,0)$ are $d$-dimensional isolated determinantal singularities. We define the $(d-1)$th polar multiplicity of the fibers $X_t\cap f_t^{-1}(0)$ and we show how the constancy of the polar multiplicities is related to the constancy of the Milnor number of $f_t$ and the Whitney equisingularity of the family.

\end{abstract}

\maketitle
\section{Introduction}

We consider analytic families of function germs $\{f_t\colon(X_t,0)\to (\mathbb{C},0)\}$ with isolated critical points, where $(X_t,0)$ are $d$-dimensional isolated determinantal singularities (IDS). To be more precise, we assume that $(\mathcal X,0)$ is a variety in $(\C\times \C^N,0)$ of dimension $d+1$, given by zero locus of the ideal $I_s(A)$ generated by the minors of size $s$ of a matrix $A=(a_{ij})$ of size $m\times n$, with $a_{ij}\in\mathcal O_{N+1}$ and such that $d=N-(m-s+1)(n-s+1)$. For each $t\in\C$ close to $0$, $X_t=\pi^{-1}(t)$, where $\pi\colon(\mathcal X,0)\to(\C,0)$ is the projection $(t,x)\mapsto t$. 
The family of functions $f_t$ is constructed by taking an unfolding, that is, a holomorphic map $F\colon(\mathcal X,0)\to(\C\times\C,0)$ of the form $F(t,x)=(t,f_t(x))$. 
We assume that $0$ is an isolated critical point of $f_t\colon X_t\to\C$, for all $t$ (a critical point is either a singular point $x\in X_t$ or a regular point $x\in X_t$ which is critical point of $f_t$ in the usual sense).

Our goal is to characterize the Whitney equisingularity of the family of functions by means of analytic invariants of each member of the family. The first step is to assume that the family is good. That is, there exists an open neighbourhood of the origin $D\times U$ in $\C\times\C^{N}$ such that $0$ is the only critical point of $f_t\colon X_t\to\C$ on $U$. 

If the family is good, we have a natural stratification $(\mathcal A,\mathcal A')$ of $F\colon\mathcal X\to D\times\C$ given by
\[
\mathcal A=\{\mathcal{X}\setminus F^{-1}(T),F^{-1}(T)\setminus S,S\},\quad \mathcal A'=\{(D \times \C)\setminus T,T\},
\] 
where $S=D\times \{0\}\subset \mathbb{C}\times\mathbb{C}^N$ and $T=D\times\{0\}\subset \mathbb{C}\times\mathbb{C}$. We say that the family is Whitney equisingular if this is a regular stratification of $F$ (that is, $\mathcal A$ and $\mathcal A'$ are Whitney stratifications and $F$ satisfies the Thom condition).

In a previous paper \cite{Bruna 3}, some of the authors showed that the family of varieties $\{(X_t,0)\}$ is Whitney equisingular if and only if it is good and all the polar multiplicities $m_i(X_t,0)$, $i=0,\dots,d$ are constant on $t$.  For $i=0,\dots,d-1$, $m_i(X_t,0)$ are the usual polar multiplicities of any $d$-dimensional variety (see for instance \cite{Le Teissier}). The top polar multiplicity $m_d(X_t,0)$ was introduced in \cite{Bruna 2} for IDS. It is equal to the number of critical points of a generic linear form on a determinantal smoothing $X_{t,s}$ of $X_t$. 

In our case, we also need to control the Whitney conditions of the new stratum $F^{-1}(T)\setminus S$  with respect to $S$. It seems natural that this could be done by looking at the polar multiplicities of the family of fibers $\{(Y_t,0)\}$, where $Y_t=X_t\cap f_t^{-1}(0)$. The problem is that $Y_t$ is not a determinantal singularity in the usual sense, but some kind of ``nested'' determinantal singularity, i.e., a determinantal singularity in an ambient space which is also determinantal (of a different type).

We introduce the top polar multiplicity of the fiber $m_{d-1}(Y_t,0)$ as the number of critical points of a generic linear form on a convenient smoothing $Y_{t,s}$ of $Y_t$. It has the property that the Euler characteristic of the smoothing is
\[
\chi(Y_{t,s})=\sum_{i=0}^{d-1} (-1)^i m_i(Y_t,0).
\]
This formula can be  rewritten as a relation between $m_{d-1}(Y_t,0)$, the Euler obstruction $Eu(Y_t,0)$ and the vanishing Euler characteristic $\nu(Y_t,0)=(-1)^{d-1}(\chi(Y_{t,s})-1)$:
 $$Eu(Y_t,0)+(-1)^{d-1}m_{d-1}(Y_t,0)=1+(-1)^{d-1}\nu(Y_t,0),$$
 (see Corollaries \ref{soma alternada das multiplicidades} and \ref{formula para a obstrucao de euler da fibra}).

The main result of the paper (Theorem \ref{main}) is that the family $\{f_t\colon(X_t,0)\to (\mathbb{C},0)\}$ is Whitney equisingular if and only if it is good and all the polar multiplicities $m_i(X_t,0)$, $i=0,\dots,d$ and $m_i(Y_t,0)$, $i=0,\dots,d-1$ are constant on $t$. Since we have Lê-Greuel type formulas for the vanishing Euler characteristics of $(X_t,0)$ and $(Y_t,0)$, the constancy of the polar multiplicities $m_i(X_t,0)$, $i=0,\dots,d$ and $m_i(Y_t,0)$, $i=0,\dots,d-1$  is equivalent to the constancy of the $\nu^*$-sequences of $(X_t,0)$ and $(Y_t,0)$ (obtained by the taking sections by generic planes of  codimensions $0,\dots,d$ and $0,\dots,d-1$ respectively). 

We include an appendix at the end to show that given a holomorphic function $g\colon Y\to \C$ on a variety $Y$, the set of points $y\in Y$ such that either $y$ is a singular point of $Y$ or $y$ is a degenerate critical point of $g$ is analytic.

\section{Isolated determinantal singularities}

Throughout this paper, if $A$ is a matrix we denote by $I_r(A)$ the ideal generated by the minors of size $r$ of $A$. If $H\colon(\mathbb{C}^N,0)\rightarrow(\mathbb{C}^p,0)$ is a holomorphic function germ we denote by $J(H)$ the Jacobian matrix of $H$.

Let $M_{m,n}$ be the set of complex matrices of size $m\times n$ and $M_{m,n}^s$ the subset of $M_{m,n}$ of the matrices with $\rank$ less than $s$, where $0<s\leq m\leq n$ are integer numbers. We remark that $M_{m,n}^s$ is an algebraic variety of $M_{m,n}$ of codimension $(m-s+1)(n-s+1)$ (see \cite{Arbarello}). 

Let $\psi\colon(\mathbb{C}^N,0)\rightarrow
(M_{m,n},0)$ be a holomorphic map germ and $(X,0)\subset (\mathbb{C}^N,0)$ the germ $\psi^{-1}(M_{m,n}^s)$. We say that $(X,0)$ is a determinantal singularity of type $(m,n;s)$ if it has the expected codimension, that is, $$\dim(X,0)=N-(m-s+1)(n-s+1).$$ Moreover, if $s=1$ or
$N<(m-s+2)(n-s+2)$ and $X$ is smooth at $x\in X$ for $x\neq 0$ in a small neighborhood of $0$, we say that $(X,0)$ is an
isolated determinantal singularity, shortening (IDS) (see \cite{Bruna 2}).

We consider $X=\psi^{-1}(M_{m,n}^s)$ a small enough representative of $(X,0)$, where $\psi\colon B \to M_{m,n}$ is defined on a small enough open ball $B=B_{\epsilon}$ centered at the origin in $\mathbb{C}^N$. As a generalization for the Milnor number of an ICIS, the vanishing Euler characteristic of the IDS $(X,0)$ is defined in \cite{Bruna 2} by $$\nu(X,0)=(-1)^{\dim X}(\chi(X_A)-1),$$ where $A\in M_{m,n}$ is a generic matrix and $X_A=\psi_A^{-1}(M_{m,n}^s)$, with $\psi_A\colon B\to M_{m,n}$ defined by $\psi_A(x)=\psi(x)+A$.

A map germ $\Psi\colon(\mathbb{C}\times\mathbb{C}^N,0)\rightarrow
M_{m,n}$ such that $\Psi(0,x)=\psi(x)$ for all $x\in\mathbb{C}^N$ is
called determinantal deformation of $(X,0)$. We set
$\Psi(t,x):=\psi_t(x)$ and $X_t:={\psi_t}^{-1}(M_{m,n}^s)$. We say that a determinantal deformation is a determinantal smoothing if $X_t$ is smooth for $t\neq 0$ sufficiently small. In this case, $\nu(X,0)=(-1)^{\dim X}(\chi(X_t)-1)$ (see \cite[Theorem
3.4]{Bruna 2}).

For $f\colon(X,0)\rightarrow \mathbb{C}$ a holomorphic function germ, we choose $f\colon X\to \mathbb{C}$ a representative of $f$, defined in the representative of $(X,0)$ above-mentioned. In \cite{Bruna 2} the Milnor number of $f$ is defined by $$\mu(f)=\sharp S(f_b|_{X_A}),$$ where $f_b\colon X_A\to \mathbb{C}$ is defined by $f_b(x_1,\ldots,x_N)=f(x_1,\ldots,x_N)+b_1x_1+\ldots+b_Nx_N$ for $b=(b_1,\ldots,b_N)\in \mathbb{C}^N$ generic and $\sharp S(f_b|_{X_A})$ is the number of critical points of $f_b|_{X_A}$.

Moreover, in \cite{Bruna 2}, it is defined the vanishing Euler characteristic of the fiber $X\cap f^{-1}(0)$ by $$\nu(X\cap f^{-1}(0),0):=(-1)^{\dim X-1}(\chi(X_A\cap f_b^{-1}(e))-1),$$ with $(b,A,e)\in\mathbb{C}^N\times M_{m,n}\times
\mathbb{C}$ generic values such that $X_A$ is smooth,
$f_b|_{X_A}$ is a Morse function and $e$ is a regular value of
$f_b|_{X_A}$.

Also, in \cite{Bruna 2}, it is proved that 
\begin{equation}\label{Le-Greuel}
\mu(f)=\nu(X,0)+\nu(X\cap f^{-1}(0),0),
\end{equation} 
that is, these invariants satisfy the well-known Lê-Greuel's formula.

For $(X,0)\subset(\mathbb{C}^N,0)$ any $d$-dimensional variety, Lê and Teissier in \cite{Le Teissier}, considering a generic linear projection $p\colon\mathbb{C}^N\to \mathbb{C}^{d-k+1}$ with respect to $X$, define the $k$th polar multiplicity of $(X,0)$, for $k=0,\ldots,d-1$, by $$m_k(X,0)=m_0(\overline{S(p|_{X^0})},0),$$ where $X^0$ denotes the smooth part of $X$, $S(p|_{X^0})$ is the set of critical points of $p|_{X^0}$ and $m_0(Z,0)$ is the usual multiplicity of any variety $(Z,0)$.

In the case of an ICIS $(X,0)=((\phi_1,\ldots,\phi_{N-d})^{-1}(0),0)\subset(\mathbb{C}^N,0)$, Gaffney in \cite{Gaffney definicao de multiplicidade} defines the $d$th polar multiplicity of $(X,0)$ by

\begin{center} $m_d(X,0):=\displaystyle\dim_{\mathbb{C}}\frac{\mathcal{O}_N}{\langle \phi_1,\ldots,\phi_{N-d}\rangle+I_{N-d+1}(J_x(\phi_1,\ldots,\phi_{N-d},p))}$,  \end{center} where $p\colon\mathbb{C}^N\to \mathbb{C}$ is a generic linear function. Thus, $m_d(X,0)=\sharp S(p|_{X_t})$, where $X_t$ is the Milnor fiber of $(X,0)$.

It is defined in \cite{Bruna 2} the $d$th polar multiplicity of an IDS of dimension $d$ by $$m_d(X,0)=\sharp S(p|_{X_A}),$$ where $p\colon\mathbb{C}^N\to \mathbb{C}$ is a generic linear projection.

\section{Equisingularity of families of function germs}

Let $(X,0)$ be the IDS defined by a holomorphic map germ $\psi\colon(\mathbb{C}^N,0)\rightarrow M_{m,n}$ and $f\colon(X,0)\rightarrow(\mathbb{C},0)$ a function germ with isolated singularity, that is, 
in a sufficiently small neighborhood of $0$, $f$ is regular on $X \setminus\{0\}$.

Let $\Psi\colon(\mathbb{C}\times \mathbb{C}^N,0)\rightarrow M_{m,n}$ be a determinantal deformation of $(X,0)$ and $$\begin{array}{cccc}
F\colon & (\mathcal{X},0) & \to & (\mathbb{C}\times \mathbb{C},0) \\
& (t,x) & \mapsto & F(t,x):=(t,f_t(x))
\end{array}$$ an unfolding of $f$, where $\mathcal{X}:=\Psi^{-1}(M_{m,n}^s)$. We assume that $F$ is origin preserving, that is, $0\in X_t$ and $f_t(0)=0$ for $t$ small enough. So, we can see the unfolding $F$ as a 1-parameter family of map germs $\{f_t\colon(X_t,0)\to (\C,0)\}_{t\in D}$, where $D$ is an open neighborhood of the origin in $\mathbb{C}$.

\begin{defi} We say:

{\rm(1)} $(\mathcal{X},0)$ is $\nu$-constant if $\nu(X_t,0)=\nu(X,0)$ for $t$ small enough;

{\rm(2)} $(\mathcal{X},0)$ is a good family if there exists a representative defined in $D\times U$, where $D$ and $U$ are open neighbourhoods of the origin in $\mathbb{C}$ and $\mathbb{C}^N$ respectively, such that $X_t\setminus \{0\}$ is smooth, for any $t\in D$, that is, $S(X_t)=\{0\}$ on $U$, for all $t\in D$, where $S(X_t)$ is the singular set of $X_t$;

{\rm(3)} $(\mathcal{X},0)$ is topologically trivial if there is a homeomorphism $H\colon(\mathcal{X},0)\rightarrow(\mathbb{C}\times X,0)$ which commutes with the projection, that is, $\pi\circ H=\pi$, where $\pi\colon(\mathcal{X},0)\rightarrow (\mathbb{C},0)$ is given by $\pi(t,x)=t$;

{\rm(4)} $(\mathcal{X},0)$ is Whitney equisingular if it is a good family and there exists a representative as in item (2) such that $(\mathcal{X}\setminus T,T)$ satisfies Whitney conditions, where $T=D\times\{0\}$; 

{\rm(5)} $F$ is $\mu$-constant if $\mu(f_t)=\mu(f)$ for $t$ small enough;

{\rm(6)} $F$ is good if there is a representative defined in $D\times U$, where $D$ and $U$ are open neighbourhoods of the origin in $\mathbb{C}$ and $\mathbb{C}^N$ respectively, such that $X_t\setminus \{0\}$ is smooth and $f_t$ is regular on $X_t\setminus \{0\}$, for any $t\in D$;

\rm{(7)} $F$ is topologically trivial if there are homeomorphism map germs $$\begin{array}{cccc}
G\colon & (\mathcal{X},0) & \to & (\mathbb{C}\times X,0) \\
& (t,x) & \mapsto & G(t,x)=(t,{g}_t(x))
\end{array}$$ and $$\begin{array}{cccc}
H\colon & (\mathbb{C}\times\mathbb{C},0) & \to & (\mathbb{C}\times \mathbb{C},0) \\
& (t,y) & \mapsto & H(t,y)=(t,{h}_t(y))
\end{array}$$ such that $G$ and $H$ are unfoldings of the identity and $F=H\circ U\circ G$, where $U(t,x)=(t,f(x))$ is the trivial unfolding of $f$;

{\rm(8)} $F$ is Whitney equisingular if it is a good family and there is a representative as in item (6) which admits a regular stratification given by $\mathcal A=\{\mathcal{X}\setminus F^{-1}(T),F^{-1}(T)\setminus S,S\}$ in the source  and $\mathcal A'=\{(\C\times \C)\setminus T,T\}$ in the target, where 
$S=D\times \{0\}\subset \mathbb{C}\times\mathbb{C}^N$ and $T=D\times\{0\}\subset \mathbb{C}\times\mathbb{C}$.
\end{defi}

When we consider families of singularities, it is interesting to know which is the relationship between the topological triviality and the Whitney equisingularity of the family. Moreover, we also want invariants, whose constancy in the family characterizes the topological triviality or the Whitney equisingularity.

For instance, for families of space curves $(\mathcal{X},0)=\{(X_t,0)\}$, we can see in \cite{Briancon1} and \cite{Buchweitz-Greuel}:

\begin{enumerate}
\item If $(\mathcal{X},0)$ is $\mu$-constant, then it is good;

\item $(\mathcal{X},0)$ is topologically trivial and good if and only if it is $\mu$-constant;

\item $(\mathcal{X},0)$ is Whitney equisingular if and only if it is $\mu$-constant and $m_0(X_t,0)$ is constant, where $m_0(X_t,0)$ is the multiplicity of $(X_t,0)$.
\end{enumerate}

Analogously, for a family $\{f_t\colon(X_t,0)\to(\C,0)\}$ of functions on curves we can find in \cite{tomazela e juan} the following:

\begin{enumerate}
\item If $F$ is $\mu$-constant, then it is good;

\item $F$ is topologically trivial and good if and only if it is $\mu$-constant;

\item $F$ is Whitney equisingular if and only if it is $\mu$-constant and $m_0(X_t,0)$ is constant.
\end{enumerate}

For a good family of IDS $(\mathcal{X},0)=\{(X_t,0)\}$ of dimension $d$, we see in \cite{Bruna 3} that 
$(\mathcal{X},0)$ is Whitney equisingular if and only if the polar multiplicities $m_i(X_t,0)$, $i=0,\ldots,d$, are all constant on $t$. This extends the same result for families of hypersurfaces by Teissier \cite{Teissier} and later for families of ICIS by Gaffney \cite{Gaffney 1}. Moreover, in the case of families of ICIS we also have that a $\mu$-constant family is always good, which is not true in general for families of IDS. If $d\ne2$, the $\mu$-constant condition also controls the topological triviality of a family of hypersurfaces (see \cite{Le Ramanujam}) or ICIS (see \cite{Parameswaran}). This result has been also extended recently for families of IDS in \cite{Bruna 3}.

Our goal here is to characterize the Whitney equisingularity by means of the constancy of some invariants in a family of function germs on IDS $$F\colon(\mathcal{X},0)\rightarrow (\mathbb{C}\times \mathbb{C},0).$$

Throughout this section, let $f\colon(X,0)\to (\C,0)$ be a function on an IDS with isolated singularity and let $F\colon(\mathcal X,0)\to(\C\times\C,0)$ be an origin preserving unfolding of $f$.

We can see in \cite[Lemma 
5.1]{Bruna 3} that if $p\colon\mathbb{C}^N\to \mathbb{C}$ is a generic linear projection then we have the conservation of the Milnor number of $p$, that is,
$$
\mu(p|_{X})=\displaystyle\sum_{y\in S(p|_{X_t^0})}\mu(p|_{X_t},y)+\sum_{x\in S({X_t})}\mu(p|_{X_t},x)=\displaystyle\sum_{x\in S(p|_{X_t})}\mu(p|_{X_t},x),
$$
for all $t$ small enough. In fact, such formula is presented in \cite[Lemma 
5.1]{Bruna 3} as follows, 
$$
\mu(p|_{X})=\displaystyle\sum_{y\in S(p|_{X_t^0})}\mu(p|_{X_t},y)+\sum_{x\in S({X_t})}m_d(X_t,x),$$  where $X_t^0$ is the smooth part of $X_t$ and $S(X_t)$ is the singular locus of $X_t$.

We prove in the following theorem that the same conservation of Milnor number holds for a family $f_t$ of functions.

\begin{teo}\label{formula do numero de milnor} For all $t$ small enough,
$$\mu(f)=\displaystyle\sum_{y\in S(f_t|_{X_t^0})}\mu(f_t,y)+\sum_{x\in S({X_t})}\mu(f_t,x)=\sum_{z\in S({f_t})}\mu(f_t,z).$$
\end{teo}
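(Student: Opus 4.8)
The plan is to follow the argument of \cite[Lemma 5.1]{Bruna 3} for the conservation of the Milnor number of a generic linear projection, replacing the projection $p$ there by a generic affine perturbation of $f_t$. Fix a representative $F$ of the unfolding on $D\times U$, with $D$ and $U$ small balls, and choose a generic matrix $A\in M_{m,n}$ so that $X_A=\psi_A^{-1}(M_{m,n}^s)$ is smooth; since the bad parameters form a proper analytic set already avoided at $t=0$, after shrinking $D$ we may assume that $X_{t,A}=(\psi_t+A)^{-1}(M_{m,n}^s)$ is smooth for every $t\in D$. For generic $b=(b_1,\dots,b_N)\in\C^N$ put $f_{t,b}(x)=f_t(x)+b_1x_1+\cdots+b_Nx_N$. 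By the definition of the Milnor number on an IDS recalled above (\cite{Bruna 2}), $\mu(f)=\mu(f_0)=\sharp S(f_{0,b}|_{X_{0,A}})$ for generic $(A,b)$, so it suffices to show that $\sharp S(f_{t,b}|_{X_{t,A}})$ is independent of $t$ for $t$ small, and that it equals the right-hand side of the statement.

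For the first point, let $C$ be the analytic set of pairs $(t,x)$ with $x\in X_{t,A}$ at which $d(f_{t,b}|_{X_{t,A}})$ vanishes; this is well defined because each fibre $X_{t,A}$ is smooth. Since $f$ has an isolated singularity and $F$ is origin preserving, by shrinking $U$, then $D$, then the perturbation parameters $(A,b)$, in this order, one guarantees as in \cite[Lemma 5.1]{Bruna 3} that no point of $C$ lies over $\partial U$, so that $C\to D$ is finite and proper near $0$. A generic $b$ makes $f_{0,b}|_{X_{0,A}}$ a Morse function; the implicit function theorem then produces, for $t$ small, $\sharp S(f_{0,b}|_{X_{0,A}})$ nondegenerate critical points of $f_{t,b}|_{X_{t,A}}$ near those at $t=0$, and since $C\to D$ is finite and proper the total number of critical points (counted with multiplicity) is independent of $t$, so there are no others. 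Hence $\sharp S(f_{t,b}|_{X_{t,A}})=\mu(f)$ for all $t$ small.

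For the second point, fix $t\ne 0$ small. The critical set decomposes as the disjoint union $S(f_t|_{X_t})=S(f_t|_{X_t^0})\sqcup S(X_t)$ of the critical points on the smooth locus and the singular points of $X_t$ (the latter being critical by convention). Since $f_t$ is regular on $X_t^0$ away from $S(f_t|_{X_t^0})$, for $(A,b)$ small enough every critical point of $f_{t,b}|_{X_{t,A}}$ lies in an arbitrarily small ball around some point of $S(f_t|_{X_t})$, and these balls are disjoint. Around a point $y\in S(f_t|_{X_t^0})$, where $(X_t,y)$ is smooth, the number of critical points of the perturbation absorbed into the ball stabilises, as $(A,b)\to 0$, to the usual Milnor number $\mu(f_t,y)$ of the function germ $f_t$ on the smooth germ $(X_t,y)$; around a singular point $x\in S(X_t)$, where $(X_t,x)$ is an IDS of the same type $(m,n;s)$ on which $f_t$ has an isolated singularity, this number is by definition $\mu(f_t,x)$. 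Adding the local contributions over all of $S(f_t|_{X_t})$ gives
$$
\mu(f)=\sharp S(f_{t,b}|_{X_{t,A}})=\sum_{y\in S(f_t|_{X_t^0})}\mu(f_t,y)+\sum_{x\in S(X_t)}\mu(f_t,x)=\sum_{z\in S(f_t)}\mu(f_t,z),
$$
the last equality being exactly the decomposition above.

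The main obstacle is the properness step in the second paragraph: one must prevent critical points of the perturbed function from escaping to the boundary $\partial U$ of the chosen representative as $t$ varies, which forces the careful ordering of the limits (first the radius of $U$, then $t$, then $(A,b)$) as in \cite[Lemma 5.1]{Bruna 3}; the isolated-singularity hypothesis on $f$ and the good construction of the representative are precisely what make this control possible. The analyticity of the loci used along the way (such as $C$ and the locus of singular or degenerate-critical points) is ensured in general by the result of the Appendix.
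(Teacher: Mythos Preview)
Your proof is correct and follows essentially the same strategy as the paper: both compute $\mu(f)$ as $\sharp S(f_{(b,t)}|_{X_{(A,t)}})$ for generic $(b,A)$ and small $t$, then localize these critical points into pairwise disjoint Milnor discs around the points of $S(f_t)=S(f_t|_{X_t^0})\sqcup S(X_t)$ and identify each local count with the corresponding local Milnor number. The only difference is one of detail: the paper asserts $\mu(f)=\sharp S(f_{(b,t)}|_{X_{(A,t)}})$ directly (implicitly using the well-definedness results of \cite{Bruna 2}), whereas you spell out the conservation-of-number via properness of the critical locus over the parameter space, which is a welcome elaboration rather than a different route.
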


\begin{proof} We choose $f\colon X\rightarrow \mathbb{C}$ a representative of $f$, where $X=\psi^{-1}(M_{m,n}^s)$ is a small enough representative of $(X,0)$, with $\psi\colon B \to M_{m,n}$ defined on a small enough open ball $B=B_{\epsilon}$ centered at the origin in $\mathbb{C}^N$ such that in $B_{\epsilon}$ the origin is the only singular point of $f$. 
	
	For a matrix $A\in M_{m,n}$, let $\psi_A\colon(\mathbb{C}^N,0)\rightarrow M_{m,n}$ be defined by $$\psi_A(x)=\psi(x)+A$$ and we write $X_A=\psi_A^{-1}(M_{m,n}^s)$.
	
	Given $b=(b_1,\ldots,b_N)\in \mathbb{C}^N$ let $f_b|_{X_A}\colon X_A\to \mathbb{C}$ be defined by $$f_b(x_1,\ldots,x_N)=f(x_1,\ldots,x_N)+b_1x_1+\ldots+ b_Nx_N.$$

	We construct a new deformation as the sum of $X_t$ and $X_A$, that is, for $A\in M_{m,n}$ and $t\in D$ we define $$X_{(A,t)}=(\psi_t+A)^{-1}(M_{m,n}^s).$$
	
	Moreover, we put $$f_{(b,t)}|_{X_{(A,t)}}\colon X_{(A,t)}\to \mathbb{C}, \, \,  f_{(b,t)}(x_1,\ldots,x_N):=f_t(x_1,\ldots,x_N)+b_1x_1+\ldots+b_Nx_N.$$

We denote by $z_1,\ldots,z_k$ and $y_1,\ldots,y_l$ the singular points of $X_t$ and $f_t|_{X_t^0}$, respectively.

For each $i=1,\ldots,k$ and $j=1,\ldots,l$ we choose a Milnor disc $D_i$ for $(X_t,z_i)$ and $E_j$ around $y_j$ for $f_t|_{X_t^0}$ such that these discs are pairwise disjoint. We have \begin{eqnarray*}
	\mu(f)&=&\sharp S(f_{(b,t)}|_{X_{(A,t)}})\\
	&=&\displaystyle\sum_{j=1}^l\sharp S(f_{(b,t)}|_{X_{(A,t)}\cap E_j})+\sum_{i=1}^k\sharp S(f_{(b,t)}|_{X_{(A,t)}\cap D_i})\\
	&=&\displaystyle\sum_{j=1}^l\mu(f_t,y_j)+\sum_{i=1}^k\mu(f_t,z_i).
\end{eqnarray*}  \end{proof}

\begin{cor} \label{teo final} Assume that $(\mathcal{X},0)$ is a  good family. Then $F$ is $\mu$-constant if and only if it is good.
\end{cor}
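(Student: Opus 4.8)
The plan is to use the conservation-of-Milnor-number formula from Theorem \ref{formula do numero de milnor} together with the hypothesis that $(\mathcal X,0)$ is a good family of IDS. Recall that by Theorem \ref{formula do numero de milnor}, for every $t$ small enough we have
\[
\mu(f)=\sum_{y\in S(f_t|_{X_t^0})}\mu(f_t,y)+\sum_{x\in S(X_t)}\mu(f_t,x).
\]
Since $(\mathcal X,0)$ is good, there is a representative on $D\times U$ for which $S(X_t)=\{0\}$ for all $t\in D$; hence the right-hand side decomposes as $\mu(f_t|_{X_t^0} \text{ on } U\setminus\{0\})$-contributions plus the single term $\mu(f_t,0)$. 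The key observation is that every summand on the right is a non-negative integer (each is a local Milnor number of a function germ, resp.\ of $f_t$ at a singular point of $X_t$), and that $\mu(f_t,0)\ge \mu(f_t)$ in the sense that $\mu(f_t)$, the Milnor number of $f_t\colon X_t\to\C$ as defined via $\sharp S(f_{(b,t)}|_{X_{(A,t)}})$ localized near $0$, equals precisely the sum of those contributions coming from a Milnor ball around the origin.

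First I would prove the ``good $\Rightarrow$ $\mu$-constant'' direction. If $F$ is good, then on a suitable representative $f_t$ is regular on $X_t\setminus\{0\}$, so $S(f_t|_{X_t^0})\subset\{0\}$ and $S(X_t)=\{0\}$; thus the formula in Theorem \ref{formula do numero de milnor} collapses to $\mu(f)=\mu(f_t,0)=\mu(f_t)$, which is exactly $\mu$-constancy. Conversely, for ``$\mu$-constant $\Rightarrow$ good'', suppose $F$ is $\mu$-constant, i.e.\ $\mu(f_t)=\mu(f)$ for all small $t$. Since $(\mathcal X,0)$ is a good family, $S(X_t)=\{0\}$ already, so what remains is to show $f_t$ is regular on $X_t^0\cap(U\setminus\{0\})$, i.e.\ $S(f_t|_{X_t^0})\subset\{0\}$. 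Plugging into Theorem \ref{formula do numero de milnor},
\[
\mu(f)=\sum_{y\in S(f_t|_{X_t^0})}\mu(f_t,y)+\mu(f_t,z)\ \ge\ \mu(f_t,0)+\sum_{y\in S(f_t|_{X_t^0})\setminus\{0\}}\mu(f_t,y),
\]
and since $\mu(f_t,0)$ is at least the global Milnor number $\mu(f_t)=\mu(f)$ of the member $f_t$ (the critical points away from $0$ contribute extra critical points in $S(f_{(b,t)}|_{X_{(A,t)}})$ to the Morsification computing $\mu(f_t)$), we get $\sum_{y\neq 0}\mu(f_t,y)\le 0$, forcing the sum over $y\ne 0$ to be empty. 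Hence $S(f_t|_{X_t^0})=\{0\}$ and $F$ is good.

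The main obstacle is making rigorous the inequality $\mu(f_t,0)\ge\mu(f_t)$, or equivalently, disentangling how the number $\mu(f)=\sharp S(f_{(b,t)}|_{X_{(A,t)}})$ splits between a Milnor ball around the origin and the disjoint discs $E_j$, $D_i$ around the other critical points; this is precisely the content of the displayed decomposition in the proof of Theorem \ref{formula do numero de milnor}, so the cleanest route is to invoke that decomposition directly: $\mu(f)=\sum_j\mu(f_t,y_j)+\sum_i\mu(f_t,z_i)$ with each term $\ge 1$ (a genuine critical point contributes at least one critical point to the generic Morsification), and $\mu(f_t,0)=\mu(f_t)$ when $0$ is the only critical point in the ball used to define $\mu(f_t)$. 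Then $\mu$-constancy $\mu(f_t)=\mu(f)$ forces the index set $\{y_j\}\cup\{z_i\}$ to reduce to $\{0\}$. I would also remark that the hypothesis that $(\mathcal X,0)$ is good is genuinely used: it guarantees $S(X_t)=\{0\}$, so one only has to control the critical points of $f_t$ on the smooth part $X_t^0$, and it is exactly at this point that the contrast with the unconditional IDS case (where $\mu$-constant need not imply good) appears.
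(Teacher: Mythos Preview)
Your approach is correct and is essentially the paper's: apply Theorem \ref{formula do numero de milnor} with $S(X_t)=\{0\}$ (from the good hypothesis on $(\mathcal X,0)$) to get $\mu(f)=\sum_{y\in S(f_t|_{X_t^0})}\mu(f_t,y)+\mu(f_t)$, and then read off each direction. The only wrinkle is your ``main obstacle'': there is nothing to prove, because by definition $\mu(f_t)$ \emph{is} the Milnor number of the germ $f_t\colon(X_t,0)\to(\C,0)$, i.e.\ $\mu(f_t)=\mu(f_t,0)$; thus $\mu$-constancy immediately gives $\sum_{y\in S(f_t|_{X_t^0})}\mu(f_t,y)=0$, and since each summand is $\ge 1$ the set $S(f_t|_{X_t^0})$ is empty (note $0\notin X_t^0$, so the conclusion is $S(f_t|_{X_t^0})=\emptyset$, not $\subset\{0\}$).
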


\begin{proof}

Assume that $F$ is $\mu$-constant. Since $(\mathcal{X},0)$ is good, by Theorem \ref{formula do numero de milnor} $$\mu(f)=\displaystyle\sum_{y\in S(f_t|_{X_t^0})}\mu(f_t,y)+\mu(f_t).$$

Then $\displaystyle\sum_{y\in S(f_t|_{X_t^0})}\mu(f_t,y)=0$. Therefore $S(f_t|_{X_t^0})=\emptyset$ and hence $F$ is good.

On the other hand, if $F$ is good, there is a representative defined in $D\times
U$, where $D$, $U$ are open neighbourhoods of the origin in
$\mathbb{C}$, $\mathbb{C}^N$ respectively, such that
$X_t\setminus\{0\}$ is smooth and $f_t$ is regular on
$X_t\setminus\{0\}$, for any $t\in D$. Thus $S(X_t)=\{0\}$ and $S(f_t)=\{0\}$ on $U$. Therefore, by Theorem \ref{formula do numero de milnor}, $\mu(f)=\mu(f_t)$. \end{proof}

Another consequence of Theorem \ref{formula do numero de milnor} is the fact that the Milnor number of a function is upper semicontinuous:

\begin{cor} \label{uper} For all $x\in S(f_t)$ and for all $t$ small enough,
$$
\mu(f_t,x)\le \mu(f).
$$
\end{cor}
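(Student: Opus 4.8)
The plan is to deduce Corollary \ref{uper} directly from the conservation formula in Theorem \ref{formula do numero de milnor}, using the fact that all the local Milnor numbers appearing there are nonnegative integers. First I would fix a small $t$ and recall that Theorem \ref{formula do numero de milnor} gives
\[
\mu(f)=\sum_{z\in S(f_t)}\mu(f_t,z),
\]
where $S(f_t)$ is the (finite) set of critical points of $f_t$ on $X_t$ in the chosen representative, each $z$ being either a singular point of $X_t$ or a critical point of $f_t$ on the smooth part $X_t^0$. In either case $\mu(f_t,z)$ is defined as the number of critical points of a generic perturbation $f_{(b,t)}$ restricted to a determinantal smoothing of $X_t$ inside a small Milnor ball around $z$, hence $\mu(f_t,z)\ge 0$; in fact it is a nonnegative integer.

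Next I would simply isolate one term: for a given $x\in S(f_t)$,
\[
\mu(f_t,x)=\mu(f)-\sum_{\substack{z\in S(f_t)\\ z\neq x}}\mu(f_t,z)\le \mu(f),
\]
since the removed sum is a sum of nonnegative terms. This is the whole argument, so the ``main obstacle'' is essentially bookkeeping: one must make sure that the representative and the Milnor radius $\epsilon$ are chosen uniformly so that $S(f_t)$ is finite and the local Milnor numbers $\mu(f_t,z)$ are well defined and independent of the auxiliary generic data $(b,A)$ — but this is exactly what is set up in the proof of Theorem \ref{formula do numero de milnor} (pairwise disjoint Milnor discs $D_i$, $E_j$), so no new work is needed.

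If one wants to phrase it without reference to disjoint discs, an equivalent route is: by upper semicontinuity of the number of critical points of a generic linear perturbation under specialization, together with the conservation equality, the total count $\mu(f)$ splits as a sum over the points of $S(f_t)$ with no point contributing more than the total; the inequality $\mu(f_t,x)\le\mu(f)$ follows. I expect the referee-level subtlety, if any, to be only the nonnegativity and integrality of $\mu(f_t,x)$ for $x$ a singular point of $X_t$ — which follows since it equals $\sharp S(f_{(b,t)}|_{X_{(A,t)}\cap D_i})$, a cardinality of a finite set — rather than anything genuinely hard.
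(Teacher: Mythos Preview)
Your proposal is correct and is precisely the argument the paper intends: the corollary is stated without proof as an immediate consequence of Theorem \ref{formula do numero de milnor}, and your derivation---isolating one nonnegative summand in the conservation formula $\mu(f)=\sum_{z\in S(f_t)}\mu(f_t,z)$---is exactly how one fills in that step.
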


The next lemma is also an interesting consequence of Theorem \ref{formula do numero de milnor}.

\begin{lem} \label{consequencia do cor 4.4 bruna} If $(\mathcal{X},0)$ is a good and topologically trivial family, then $$\displaystyle\sum_{y\in S(f_t|_{X_t^0})}\nu(X_t,y)+\displaystyle\sum_{y\in S(f_t)}\nu(X_t\cap f_t^{-1}(0),y)=\nu(X\cap f^{-1}(0),0).$$ \end{lem}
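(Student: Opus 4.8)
The strategy is to combine the Lê–Greuel formula (\ref{Le-Greuel}) with the conservation of the Milnor number proved in Theorem \ref{formula do numero de milnor}, together with the fact that $\nu(X,0)$ is a topological invariant of the family under the hypothesis of good topological triviality. First I would write the Lê–Greuel formula at the central fiber, $\mu(f)=\nu(X,0)+\nu(X\cap f^{-1}(0),0)$, and also apply it locally at each critical point $z$ of $f_t$ lying on $X_t$. At a singular point $z\in S(X_t)$ of the variety, the local version gives $\mu(f_t,z)=\nu(X_t,z)+\nu(X_t\cap f_t^{-1}(0),z)$; at a point $y\in S(f_t|_{X_t^0})$ that is a smooth point of $X_t$, we have $\nu(X_t,y)=0$ and $\mu(f_t,y)=\nu(X_t\cap f_t^{-1}(0),y)$ (this is the classical Lê–Greuel identity for a function with an isolated critical point on a smooth space, where $X_t\cap f_t^{-1}(0)$ has a local ICIS).

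Next I would sum these local identities over all $z\in S(f_t)$. By Theorem \ref{formula do numero de milnor}, $\sum_{z\in S(f_t)}\mu(f_t,z)=\mu(f)$, so the left-hand side of the summed identity equals $\mu(f)$, and we obtain
\[
\mu(f)=\sum_{x\in S(X_t)}\nu(X_t,x)+\sum_{y\in S(f_t)}\nu(X_t\cap f_t^{-1}(0),y),
\]
where I have absorbed the $y\in S(f_t|_{X_t^0})$ contributions into the second sum (their $\nu(X_t,\cdot)$ terms vanish, so in the first sum only genuine singular points of $X_t$ survive; but since the family $(\mathcal X,0)$ is good, $S(X_t)=\{0\}$ on a suitable representative, so actually $S(X_t)=\{0\}$—wait, I must be careful here). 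Since $(\mathcal X,0)$ is a good family, on a suitable representative $S(X_t)=\{0\}$, so the first sum is just $\nu(X_t,0)$. Then I would invoke the topological triviality: because $(\mathcal X,0)$ is good and topologically trivial, $\nu(X_t,0)=\nu(X,0)$ for small $t$ (this is the content of the main equisingularity results of \cite{Bruna 3}, or follows since $\nu$ is an Euler-characteristic invariant of the Milnor fiber, which is a topological invariant). Substituting and using $\mu(f)=\nu(X,0)+\nu(X\cap f^{-1}(0),0)$ from (\ref{Le-Greuel}), the term $\nu(X_t,0)=\nu(X,0)$ cancels against $\nu(X,0)$, leaving exactly the claimed identity, once we notice that the contribution of the singular point $x=0$ of $X_t$ to $S(f_t)$ must be included: the first sum $\sum_{y\in S(f_t|_{X_t^0})}\nu(X_t,y)$ in the statement is over smooth points only, while the $\nu(X_t,0)$ term is the one that cancels. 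Reorganizing, the surviving terms are precisely $\sum_{y\in S(f_t|_{X_t^0})}\nu(X_t,y)$ (which is $0$ under goodness, consistent with the statement) plus $\sum_{y\in S(f_t)}\nu(X_t\cap f_t^{-1}(0),y)$, matching the right-hand side.

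The main obstacle, and the step requiring the most care, is justifying that $\nu(X_t,0)=\nu(X,0)$ under good topological triviality, and — relatedly — making sure the local Lê–Greuel formula (\ref{Le-Greuel}) applies at each point $z\in S(f_t)$, including at non-isolated-looking configurations. The local Lê–Greuel at a point $z$ where $X_t$ is singular is exactly the determinantal Lê–Greuel of \cite{Bruna 2} applied to the germ $(X_t,z)$ (which is an IDS since $X_t\setminus\{0\}$ is smooth by goodness, so the only singular point is $0$, and at $0$ the germ $(X_t,0)$ is the IDS we deform); at smooth points it is the classical formula. The bookkeeping — which points contribute to which sum, and that the goodness hypothesis forces $S(X_t)=\{0\}$ so that $\sum_{y\in S(f_t|_{X_t^0})}\nu(X_t,y)=0$ trivially while the statement is written in a form valid before imposing that simplification — is the delicate part, but it is a matter of careful accounting rather than a substantial new difficulty.
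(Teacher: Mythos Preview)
Your proposal is correct and follows essentially the same route as the paper: apply the L\^e--Greuel formula (\ref{Le-Greuel}) at the central fiber and locally at each critical point of $f_t$, sum using Theorem~\ref{formula do numero de milnor}, use goodness to reduce $S(X_t)$ to $\{0\}$, and then cancel $\nu(X_t,0)$ against $\nu(X,0)$ via topological triviality. The paper invokes \cite[Corollary~4.4]{Bruna 3} for this last equality, which is precisely the input you flagged as the key step; your observation that $\nu(X_t,y)=0$ at smooth points $y$ (making the first sum in the statement vanish) is also correct, though the paper leaves that term in place without comment.
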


\begin{proof} Since $(\mathcal{X},0)$ is good there exists a representative defined in $D\times U$, where $D$ and $U$ are open neighbourhoods of the origin in $\mathbb{C}$ and $\mathbb{C}^N$ respectively, such that $S(X_t)=\{0\}$ on $U$, for all $t\in D$. We consider $F$ defined in this representative of $(\mathcal{X},0)$. By Lê-Greuel's formula (\ref{Le-Greuel}) and Theorem \ref{formula do numero de milnor}, $$\displaystyle\sum_{y\in S(f_t|_{X_t^0})}\mu(f_t,y)+\mu(f_t)=\nu(X,0)+\nu(X\cap f^{-1}(0),0).$$

Applying Lê-Greuel's formula again to each $\mu(f_t)$, $$\displaystyle\sum_{y\in S(f_t|_{X_t^0})}\mu(f_t,y)+(\nu(X_t,0)+\nu(X_t\cap f_t^{-1}(0),0))=\nu(X,0)+\nu(X\cap f^{-1}(0),0).$$

It follows from \cite[Corollary 
4.4]{Bruna 3} that $\nu(X_t,0)=\nu(X,0)$. Therefore $$\displaystyle\sum_{y\in S(f_t|_{X_t^0})}\mu(f_t,y)+\nu(X_t\cap f_t^{-1}(0),0)=\nu(X\cap f^{-1}(0),0).$$

Again, by Lê-Greuel's formula applied to $\mu(f_t,y)$, $$\displaystyle\sum_{y\in S(f_t|_{X_t^0})}(\nu(X_t,y)+\nu(X_t\cap f_t^{-1}(0),y))+\nu(X_t\cap f_t^{-1}(0),0)=\nu(X\cap f^{-1}(0),0).$$

Hence $$\displaystyle\sum_{y\in S(f_t|_{X_t^0})}\nu(X_t,y)+\displaystyle\sum_{y\in S(f_t)}\nu(X_t\cap f_t^{-1}(0),y)=\nu(X\cap f^{-1}(0),0).$$ \end{proof}

\begin{cor} If $F$ is Whitney equisingular, then $F$ is $\mu$-constant and $m_i(X_t,0)$, $i=0,\ldots,d$, are constant on $t\in D$, where $d$ is the dimension of $X$ and $m_i(X_t,0)$ is the $i$th polar multiplicity of $(X_t,0)$. \end{cor}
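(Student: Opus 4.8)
The plan is to reduce the statement to results already at hand, treating the two conclusions separately.

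For the $\mu$-constancy, the key point is that Whitney equisingularity of $F$ forces $F$ to be a good family in the sense of item~(6): by definition there is a representative on some $D\times U$ for which $X_t\setminus\{0\}$ is smooth and $f_t$ is regular on $X_t\setminus\{0\}$ for all $t\in D$. In particular this same representative shows that $(\mathcal X,0)$ is a good family in the sense of item~(2), so the hypothesis of Corollary~\ref{teo final} is met, and since $F$ is good that corollary gives that $F$ is $\mu$-constant. (One could also argue that Whitney equisingularity yields topological triviality by the Thom--Mather first isotopy theorem and then combine this with Theorem~\ref{formula do numero de milnor}, but the route through Corollary~\ref{teo final} is quicker.)

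For the constancy of the polar multiplicities $m_i(X_t,0)$, $i=0,\dots,d$, I would deduce from the Whitney equisingularity of $F$ that the underlying family of varieties $(\mathcal X,0)$ is itself Whitney equisingular, and then invoke the result of \cite{Bruna 3} that a good family of $d$-dimensional IDS is Whitney equisingular precisely when $m_i(X_t,0)$ is constant for $i=0,\dots,d$. To get Whitney equisingularity of $(\mathcal X,0)$ one starts from the regular stratification $\mathcal A=\{\mathcal X\setminus F^{-1}(T),\,F^{-1}(T)\setminus S,\,S\}$ of the source and coarsens it to $\{\mathcal X\setminus S,\,S\}$. Since the family is good, $\mathcal X\setminus S$ is smooth of dimension $d+1$ (this is part of the standard setup for good families of IDS in \cite{Bruna 3}; it reflects the transversality of $\Psi$ to the regular part of $M_{m,n}^s$ away from the section, inherited from that of each $\psi_t$). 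Now the Whitney $(a)$ and $(b)$ conditions for the pair $(\mathcal X\setminus S,\,S)$ follow from those for $\mathcal A$: given sequences of points of $\mathcal X\setminus S$ (and of $S$) converging to a point of $S$, pass to a subsequence lying entirely in $\mathcal X\setminus F^{-1}(T)$ or entirely in $F^{-1}(T)\setminus S$; in the first case regularity of $\mathcal A$ gives the conditions verbatim, and in the second case it gives them with the limit of the tangent planes $T_x\big(F^{-1}(T)\setminus S\big)$ in place of that of $T_x(\mathcal X\setminus S)$, which suffices because $T_x\big(F^{-1}(T)\setminus S\big)\subseteq T_x(\mathcal X\setminus S)$ and the inclusion passes to limits in the Grassmannian. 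Hence $(\mathcal X,0)$ is Whitney equisingular, and \cite{Bruna 3} finishes the argument.

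The main obstacle is this last step, the passage from a regular stratification of the map $F$ to Whitney equisingularity of the total space $(\mathcal X,0)$, since, unlike the $\mu$-constancy, it is not purely formal: one must know that $\mathcal X\setminus S$ is a smooth manifold (so that the coarsened stratification makes sense) and that merging the two non-minimal strata $\mathcal X\setminus F^{-1}(T)$ and $F^{-1}(T)\setminus S$ cannot destroy the Whitney conditions relative to $S$. Once these points are settled, everything rests on Corollary~\ref{teo final} and the main theorem of \cite{Bruna 3}.
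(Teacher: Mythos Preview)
Your proposal is correct and follows essentially the same route as the paper: Whitney equisingularity of $F$ gives goodness, hence $\mu$-constancy via Corollary~\ref{teo final}, and implies Whitney equisingularity of $(\mathcal X,0)$, whence constancy of $m_i(X_t,0)$ by \cite[Theorem~5.3]{Bruna 3}. The paper's proof of this corollary simply asserts that $(\mathcal X,0)$ is Whitney equisingular; your subsequence argument showing that the coarsened pair $(\mathcal X\setminus S,S)$ inherits the Whitney conditions is exactly the argument the paper spells out later in the proof of Theorem~\ref{main}.
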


\begin{proof} Since $F$ is Whitney equisingular, $(\mathcal{X},0)$ is Whitney equisingular too. Hence $m_i(X_t,0)$, $i=0,\ldots,d$, are constant on $t\in D$ (see \cite[Theorem 5.3]{Bruna 3}). 

Moreover, since $F$ is good, it is $\mu$-constant by Corollary \ref{teo final}. \end{proof}

In the particular case in which the IDS is an ICIS $(s=1)$, we present sufficient conditions for $F$ to be Whitney equisingular.

\begin{teo} If $s=1$, $F$ is $\mu$-constant, $m_i(X_t,0)$, $i=0,\ldots,d$, and $m_k(X_t\cap f_t^{-1}(0),0)$, $k=0,\ldots,d-1$, are constant on $t\in D$, then $F$ is Whitney equisingular.  
\end{teo}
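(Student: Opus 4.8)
The plan is to reduce the statement to the already-known Whitney equisingularity criterion for families of IDS (the variety case) and to a careful analysis of the single new stratum $F^{-1}(T)\setminus S$ with respect to $S$. First I would observe that since $F$ is $\mu$-constant, Corollary~\ref{teo final} gives that $F$ is good, so we may fix a representative on $D\times U$ with $S(X_t)=\{0\}$ and $f_t$ regular on $X_t\setminus\{0\}$; this makes $F^{-1}(T)\setminus S$ a well-defined stratum. Next, the constancy of $m_i(X_t,0)$, $i=0,\dots,d$, together with goodness, gives via \cite[Theorem~5.3]{Bruna 3} that $(\mathcal X,0)$ is Whitney equisingular, i.e. the pair $(\mathcal X\setminus S,S)$ satisfies Whitney's conditions and (by the standard Lê--Teissier / Gaffney machinery in the ICIS case $s=1$) the Thom condition holds along $S$ for the map $\mathcal X\to\C$. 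So the only thing left to verify is the Whitney regularity of the pair $(F^{-1}(T)\setminus S, S)$ and the Thom $a_F$-condition for $F$ along $S$.

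For the new stratum, the key point is that $F^{-1}(T)\setminus S$ is, fibrewise over $t$, the variety $Y_t\setminus\{0\}$ where $Y_t=X_t\cap f_t^{-1}(0)$, and the hypothesis that $m_k(Y_t,0)$, $k=0,\dots,d-1$, are constant should, by the same philosophy as in the variety case, force Whitney equisingularity of the family $\{(Y_t,0)\}$ — hence of the pair $(F^{-1}(T)\setminus S,S)$. Concretely, I would invoke the Teissier–Gaffney principle that for a family of (here, ``nested determinantal'') singularities of constant total space dimension $d$, constancy of the full polar sequence obtained by generic plane sections of codimensions $0,\dots,d-1$ is equivalent to Whitney equisingularity; when $s=1$ the relevant Lê--Greuel type formulas relate $m_k(Y_t,0)$ to the $\nu^*$-sequence of $Y_t$, and the constancy of that sequence is exactly what the principle of specialization of integral closure (à la Gaffney) needs to conclude the Whitney conditions. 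One should also check that $\mu$-constancy and constancy of $\nu(X_t,0)$, $\nu(Y_t,0)$ (which follow from the polar hypotheses via Lê--Greuel and \cite[Corollary~4.4]{Bruna 3}) guarantee no ``splitting'' of critical points, i.e. $S(f_t)=\{0\}$, so that the stratified picture is exactly $\{S, F^{-1}(T)\setminus S, \mathcal X\setminus F^{-1}(T)\}$ with no extra strata appearing.

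Finally, for the Thom condition for $F$: along $S$ one needs that limits of tangent planes to the fibres $F^{-1}(t,y)=X_t\cap f_t^{-1}(y)$ contain the tangent line to $S$. This is where I expect the main obstacle to lie, because it is precisely the failure point that distinguishes ``Whitney equisingular family of functions'' from ``Whitney equisingular family of varieties plus Whitney equisingular family of fibres.'' The strategy would be to control the relative polar varieties of $F$ (i.e. the polar varieties of the map $(f_t, p)\colon X_t\to\C^2$ for generic linear $p$) and show that their multiplicities are governed by the constants $m_i(X_t,0)$ and $m_i(Y_t,0)$ — using, in the ICIS case $s=1$, explicit colength formulas as in Gaffney's definition quoted in Section~2 — so that constancy of these invariants forces the relative polar varieties to have no component concentrated on $S$, which yields the Thom condition by the usual argument (a component of the relative polar variety over $S$ is exactly an obstruction to $a_F$). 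Assembling the Whitney conditions for $(\mathcal X\setminus S, S)$, for $(F^{-1}(T)\setminus S, S)$, the trivial incidences among the three source strata, and the Thom condition for $F$, one concludes that $(\mathcal A,\mathcal A')$ is a regular stratification of $F$, i.e. $F$ is Whitney equisingular.
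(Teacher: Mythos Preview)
Your overall architecture matches the paper's: establish goodness, then verify the Whitney conditions for each pair of source strata and the Thom condition for $F$. The treatment of the Whitney pairs $(A,C)$ and $(B,C)$ via constancy of the polar multiplicities is correct (and in the ICIS case $s=1$ one can cite Gaffney directly for $(B,C)$, since $Y_t$ is again an ICIS; the integral-closure machinery you invoke is unnecessary). Two points need correction.

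\medskip

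\textbf{The Thom condition is trivial, not the main obstacle.} You state that along $S$ one must show that limits of tangent spaces to the fibres $F^{-1}(t,y)$ contain the tangent line to $S$. This is both a misstatement of the Thom $a_F$-condition and a false assertion: the fibre $F^{-1}(t,y)=\{t\}\times(X_t\cap f_t^{-1}(y))$ has tangent space contained in $\{0\}\times\mathbb C^N$, so its limits can never contain $T_{(t,0)}S=\mathbb C\times\{0\}$. The correct Thom condition for the pair $(A,C)$ (and likewise $(B,C)$) requires that limits of $\ker d(F|_A)$ contain $\ker d(F|_C)$. Since $F|_C\colon(t,0)\mapsto(t,0)$ is an immersion, $\ker d(F|_C)(t,0)=\{0\}$, and the condition is vacuous. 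For the pair $(A,B)$ it holds because $F$ is a submersion on $\mathcal X^0$, so the fibres form a smooth foliation there. Thus the entire paragraph on relative polar varieties of $(f_t,p)$ is unnecessary: there is nothing to prove. The paper dispatches the Thom condition in two lines for exactly this reason.

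\medskip

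\textbf{Goodness of $(\mathcal X,0)$.} You invoke Corollary~\ref{teo final} to get $F$ good from $\mu$-constancy, but that corollary has the hypothesis that $(\mathcal X,0)$ is a good family, which you have not verified. In the paper this is obtained from $s=1$: the constancy of $m_i(X_t,0)$, $i=0,\dots,d$, implies (via the alternating-sum formula for ICIS) that $(\mathcal X,0)$ is $\mu$-constant, and a $\mu$-constant family of ICIS is good. Insert this step before applying the corollary.

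\medskip

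Finally, the pair $(A,B)$ for Whitney (which you fold into ``trivial incidences'') deserves one explicit sentence: near any point of $B$ one has a local chart of $\mathcal X^0$ to an open set in $\mathbb C^{d+1}$, and an open set trivially satisfies Whitney's conditions over any submanifold; this is the paper's argument via \cite[Lemma~2.2]{Mather}.
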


\begin{proof} We have that $F$ is $\mu$-constant and $(\mathcal{X},0)$ is also good, since it is $\mu$-constant and is a family of ICIS. Hence, $F$ is good by Corollary \ref{teo final}. Therefore there exists a representative defined in $D\times U$, where $D$ and $U$ are open neighbourhoods of the origin in $\mathbb{C}$ and $\mathbb{C}^N$ respectively, such that $X_t\setminus \{0\}$ is smooth and $f_t$ is regular on $X_t\setminus \{0\}$, for any $t\in D$.

	This representative of $F$ admits a regular stratification given by $$\mathcal{A}=
	\{\mathcal{X}\setminus F^{-1}(\mathbb{C}\times\{0\}),F^{-1}(\mathbb{C}\times\{0\})\setminus (\mathbb{C}\times\{0\}),\mathbb{C}\times\{0\}\}$$ in the source and $$\mathcal{A'}=\{(\mathbb{C}\times\mathbb{C})\setminus(\mathbb{C}\times\{0\}),\mathbb{C}\times\{0\}\}$$ in the target. We set $A=\mathcal{X}\setminus F^{-1}(\mathbb{C}\times\{0\}), B=F^{-1}(\mathbb{C}\times\{0\})\setminus (\mathbb{C}\times\{0\}), C=\mathbb{C}\times\{0\}, A'=(\mathbb{C}\times\mathbb{C})\setminus(\mathbb{C}\times\{0\})$ and $B'=\mathbb{C}\times\{0\}$.
	
We remark that $F$ takes each stratum in $\mathcal{A}$ to a stratum of $\mathcal{A'}$ submersively. 
	
	Moreover, $A$ is an open set in $\mathcal{X}^0$, where $\mathcal{X}^0=\mathcal{X}\setminus (\mathbb{C}\times\{0\})$. Since $m_i(X_t,0)$, $i=0,\ldots,d$, are constant on $t\in D$, then $(\mathcal{X}^0,C)$ satisfies the Whitney conditions (see \cite[Theorem 1]{Gaffney definicao de multiplicidade}). Hence $(A,C)$ also satisfies the Whitney conditions. Furthermore, $m_k(X_t\cap f_t^{-1}(0),0)$, $k=0,\ldots,d-1$, are constant on $t\in D$, therefore $(B,C)$ satisfies the Whitney conditions.
	
We prove now that the pair $(A,B)$ satisfies the Whitney conditions. For each $(t,y)\in B$ there is a diffeomorphism $\varphi\colon U\to U'$, where $U$ and $U'$ are open neighbourhoods in $\mathbb{C}^{d+1}$ and $\mathcal{X}\setminus (\mathbb{C}\times\{0\})$ respectively with $(t,y)\in U$ and $d$ is dimension of $X$. The pair  $(\varphi^{-1}(A\cap U'),\varphi^{-1}(B\cap U'))$ satisfies the Whitney conditions because $\varphi^{-1}(A\cap U')$ is an open set in $\mathbb{C}^{d+1}$ and thus $\varphi^{-1}(A\cap U')$ contains all the secants. Therefore $\mathcal{A}$ satisfies the Whitney equisingularity conditions (see \cite[Lemma 2.2]{Mather}). 

It is easy to prove that $(A,C)$ and $(B,C)$ satisfies Thom's condition $A_F$ because $F|_{C}(t,0)=(t,0)$ and therefore $\ker(d(F|_{C})(t,0))=\{0\}$.
The pair $(A,B)$ satisfies Thom's condition $A_F$ because $F$ is a submersion at $\mathcal{X}^0$.                                                                                                                                                                                                                                                                                                                                                                                                                                                                                                                                                                                                                                                                                                                                                                                                                                                                                                                                                                                                                                                                                                                                                                 
 Hence $F$ is Whitney equisingular. \end{proof}

Our goal now is to study the previous theorem for a family of IDS. To do this, we introduce the $(d-1)$th polar multiplicity of the fiber $Y:=X\cap f^{-1}(0)$ of a function germ $f\colon(X,0)\rightarrow (\mathbb{C},0)$ with isolated singularity. Before, we need some results.

\begin{lem} \label{imitando o Teorema 3.4 Bruna} Assume that $(\mathcal{X},0)$ is a determinantal smoothing of $(X,0)$ and $F$ is such that $f_t$ is regular, for $t\neq 0$ sufficiently small. Then for all $t\neq 0$ sufficiently small, $$\nu(Y,0)=(-1)^{d-1}(\chi(Y_t)-1),$$ where $Y_t=X_t\cap f_t^{-1}(0)$ and $Y=X\cap f^{-1}(0)$.
	
\end{lem}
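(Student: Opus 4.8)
The plan is to compare the ``generic'' definition of $\nu(Y,0)$, given in terms of sections by a generic $(b,A,e)$, with the topology of the fiber $Y_t = X_t \cap f_t^{-1}(0)$ of the given determinantal smoothing. Recall that by definition
\[
\nu(Y,0) = (-1)^{d-1}\bigl(\chi(X_A \cap f_b^{-1}(e)) - 1\bigr),
\]
for generic $(b,A,e)$ with $X_A$ smooth, $f_b|_{X_A}$ Morse and $e$ a regular value. So the task is to show $\chi(X_A \cap f_b^{-1}(e)) = \chi(Y_t)$ for $t \neq 0$ small. The strategy mirrors \cite[Theorem 3.4]{Bruna 2}: pass through a two-parameter deformation interpolating between the determinantal smoothing $X_t$ and the generic additive perturbation $X_A$, and between $f_t$ and $f_b$, and invoke a Milnor-fiber / conservation-of-Euler-characteristic argument.

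First I would set up the joint deformation $X_{(A,t)} = (\psi_t + A)^{-1}(M_{m,n}^s)$ and $f_{(b,t)}$ exactly as in the proof of Theorem \ref{formula do numero de milnor}, and consider the variety $\mathcal{Z} = \{(t, x) : x \in X_{(A,t)},\ f_{(b,t)}(x) = e\}$ over a small disc in $t$, for fixed generic $(b,A,e)$ chosen small. Since $f_t$ is assumed regular on $X_t$ for $t \neq 0$ and $X_t$ is smooth there, for $t \neq 0$ and $(b,A,e)$ sufficiently small (depending on $t$) the space $X_{(A,t)} \cap f_{(b,t)}^{-1}(e)$ is a smooth manifold homotopy equivalent to $Y_t = X_t \cap f_t^{-1}(0)$; this is because $Y_t$ itself is smooth (a regular fiber of a regular function on a smooth variety) so a small perturbation does not change its diffeomorphism type inside a fixed Milnor ball. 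Hence $\chi(X_{(A,t)} \cap f_{(b,t)}^{-1}(e)) = \chi(Y_t)$ for $t \neq 0$.

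Next I would let $t \to 0$: $X_{(A,0)} \cap f_{(b,0)}^{-1}(e) = X_A \cap f_b^{-1}(e)$, which is the space computing $\nu(Y,0)$. The key point is that the Euler characteristic of the fiber of the family $\mathcal{Z} \to (\text{$t$-disc})$ is constant, i.e., $\chi(X_{(A,t)} \cap f_{(b,t)}^{-1}(e))$ does not depend on $t$ for all small $t$ (including $t=0$). This follows from the fact that, after fixing a small enough Milnor ball $B_\epsilon$ for $(X,0)$ and $(Y,0)$ and then choosing $(b,A,e)$ generic and small, all the spaces $X_{(A,t)} \cap f_{(b,t)}^{-1}(e) \cap B_\epsilon$ are smooth and the family is a locally trivial fibration over the punctured disc as well as tame at $t=0$; more concretely one argues as in \cite{Bruna 2} that $\chi$ of the Milnor fiber is computed by the same generic data regardless of whether one first specializes $t$ or first perturbs. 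Combining the two displayed equalities gives $\chi(Y_t) = \chi(X_A \cap f_b^{-1}(e))$, hence $\nu(Y,0) = (-1)^{d-1}(\chi(Y_t)-1)$, as claimed.

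The main obstacle I anticipate is the order-of-quantifiers bookkeeping: the genericity of $(b,A,e)$ and the smallness needed for $X_{(A,t)}\cap f_{(b,t)}^{-1}(e)$ to be smooth and to retract onto $Y_t$ both depend on $t$, while the definition of $\nu(Y,0)$ fixes generic $(b,A,e)$ once and for all at $t=0$. Reconciling these requires choosing the Milnor ball $B_\epsilon$ first, then a uniform $\delta$ so that for $|t|<\delta$ and $(b,A,e)$ in a fixed generic small set the relevant transversality and smoothness hold simultaneously — exactly the delicate step carried out in \cite[Theorem 3.4]{Bruna 2}, which I would cite and adapt rather than reprove in full. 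A secondary technical point is ensuring that $f_{(b,t)}|_{X_{(A,t)}}$ has only nondegenerate behavior so that $e$ remains a regular value uniformly; this is where the hypothesis that $F$ is such that $f_t$ is regular for $t\neq 0$ is used, together with the appendix's statement that the bad set is analytic, to guarantee genericity is an open dense condition.
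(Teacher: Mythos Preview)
Your setup and overall strategy match the paper's: build the joint deformation $X_{(A,t)}$, $f_{(b,t)}$, set $Y_{(\alpha,t)}=X_{(A,t)}\cap f_{(b,t)}^{-1}(e)$, and argue constancy of $\chi$ via a fibration. The divergence is precisely at the step where you run into your order-of-quantifiers obstacle, and the paper's resolution is different from what you sketch.

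You fix a generic $(b,A,e)$ and vary $t$ alone, planning to show (i) $\chi(Y_{(\alpha,t)})$ is constant in $t$, and separately (ii) $Y_{(\alpha,t)}\simeq Y_t$ by a small-perturbation/retraction argument for $t\neq 0$. Step (ii) is what forces the ``smallness of $(b,A,e)$ depends on $t$'' difficulty, and your proposed fix (a uniform $\epsilon$--$\delta$ choice) is at best delicate. The paper avoids this entirely by enlarging the parameter space: it treats $(\alpha,t)=(b,A,e,t)$ as a single parameter in $\mathbb{C}^N\times M_{m,n}\times\mathbb{C}\times\mathbb{C}$, shows that the set $W_0$ of $(\alpha,t)$ for which $Y_{(\alpha,t)}$ is smooth with $\rank(\psi_t(x)+A)=s-1$ is a nonempty Zariski open (the bad locus $\tilde C$ is analytic by the Jacobian criterion, the projection $\pi_1\colon\tilde C\to\mathbb{C}^N\times M_{m,n}\times\mathbb{C}\times\mathbb{C}$ is finite since $\pi_1^{-1}(0)=\{0\}$, hence its image is analytic and proper), and then applies Ehresmann's theorem to conclude that $\chi(Y_{(\alpha,t)})$ is constant on the connected set $W_0$.

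The punchline is that step (ii) becomes superfluous: for $t\neq 0$ the \emph{unperturbed} point $(0,0,0,t)$ already lies in $W_0$, because $Y_{(0,0,0,t)}=Y_t$ is smooth by hypothesis; and for generic $\alpha$ the point $(\alpha,0)$ lies in $W_0$ as well. Since $W_0$ is connected, $\chi(Y_t)=\chi(Y_\alpha)$ directly, with no retraction argument and hence no quantifier conflict. So the correct fix for the obstacle you identify is not a careful $\epsilon$--$\delta$ bookkeeping but the observation that $(0,t)$ is itself a good parameter value in the full space.
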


\begin{proof} We take a representative $\Psi\colon D\times U\to M_{m,n}$, where $D, U$ are small enough open balls centered at the origin in $\mathbb{C}, \mathbb{C}^N$ respectivelly and such that $Y_t$ is smooth and $\rank(\psi_t(x))=s-1$ for all $x\in Y_t$ and for all $t\in D\setminus\{0\}$. We take $W$ the nonempty Zariski open set given by \cite[Lemma 3.1]{Daiane}, that is, $$W=(\mathbb{C}^N\times M_{m,n}\times \mathbb{C})\setminus K,$$ where \begin{center} $K=\{\alpha=(b,A,e)\in \mathbb{C}^N\times M_{m,n}\times \mathbb{C} \, \, |\, \, Y_{\alpha} \, \, \mbox{is not regular or} \, \, \rank(\psi_A(x))<s-1, \, \, \mbox{for some} \, \, x\in Y_{\alpha}\}$. \end{center}

We construct a new deformation as the sum of two deformations $X_t$ and $X_A$, that is, for $A\in M_{m,n}$ and $t\in D$ we put $$X_{(A,t)}=(\psi_t+A)^{-1}(M_{m,n}^s).$$	

Moreover, we construct $$f_{(b,t)}|_{X_{(A,t)}}\colon X_{(A,t)}\to \mathbb{C}, \, \,  f_{(b,t)}(x_1,\ldots,x_N):=f_t(x_1,\ldots,x_N)+b_1x_1+\ldots+b_Nx_N.$$ 
For each $(\alpha,t)=(b,A,e,t)\in\mathbb{C}^N\times M_{m,n}\times \mathbb{C}\times D$  we define $Y_{(\alpha,t)}:=X_{(A,t)}\cap f_{(b,t)}^{-1}(e)$.

We show now that there is a nonempty Zariski open subset $W_0\subset \mathbb{C}^N\times M_{m,n}\times \mathbb{C}\times D$ such that
\begin{enumerate}
\item $Y_{(\alpha,t)}$ is smooth and $\rank(\psi_t(x)+A)=s-1$, for all $x\in Y_{(\alpha,t)}$ and for all $(\alpha,t)\in W_0$;

\item $\chi(Y_{(\alpha,t)})$ does not depend on $(\alpha,t)\in W_0$.
\end{enumerate}

\medskip
For the first part we consider \begin{center} $\tilde{C}=\{(\alpha,t,x) \, | \, x \, \,\mbox{is a singular point of} \, \, Y_{(\alpha,t)} \, \, \mbox{or} \, \, \rank(\psi_t(x)+A)<s-1\}$ \end{center} and $W_0=(\mathbb{C}^N\times M_{m,n}\times \mathbb{C}\times\mathbb{C})\setminus C$, where \begin{center} $C=\{(\alpha,t) \, | \, Y_{(\alpha,t)} \, \,\mbox{is not regular or} \, \, \rank(\psi_t(x)+A)<s-1 \, \,\mbox{for some} \, \, x\in Y_{(\alpha,t)}\}$. \end{center}
By the Jacobian criterion, $\tilde{C}$ is an analytic set (see \cite[Theorem 4.3.15]{Pfister}).

We consider $\pi_1\colon(\tilde{C},0)\rightarrow (\mathbb{C}^N\times M_{m,n}\times \mathbb{C}\times \mathbb{C},0)$, $\pi_1(\alpha,t,x):=(\alpha,t)$. It is easy to see that $\pi_1^{-1}(0)=\{0\}$ because $f$ has isolated singularity at $0$. Then, by \cite[Theorem 3.4.24]{Pfister}, $\pi_1$ is a finite map. Thus the image $C=\pi_1(\tilde{C})$ is analytic (see \cite[Theorem 2]{Grauert}).

We take $B\subset U$ a closed ball around the origin in $\mathbb{C}^N$, small enough, such that $0$ is the only singularity of $X$ in $B$ and $\partial B$ is transverse to $X$ and to $X\cap f^{-1}(0)$.

Let $$\begin{array}{cccc}
\xi\colon & \mathbb{C}^N\times M_{m,n}\times\mathbb{C}\times\mathbb{C}\times B & \to &  M_{m,n}\times\mathbb{C} \\
& (\alpha,t,x) & \mapsto & (\psi_t(x)+A,f_{(b,t)}(x)-e).
\end{array}$$ 
The maps $\xi$ and $\partial \xi=\xi|_{\mathbb{C}^N\times M_{m,n}\times\mathbb{C}\times\mathbb{C}\times \partial B}$ are submersions. Then, by the Transversality Theorem \cite{Guillemin Pollack} for allmost all $(\alpha,t)$, $\xi_{(\alpha,t)}$ and $\partial \xi_{(\alpha,t)}$ are transverse to $\Sigma^{s-i}\times \{0\}$, where $\xi_{(\alpha,t)}(x)=\partial\xi_{(\alpha,t)}(x)=\xi(\alpha,t,x)$. But $$\dim \mathbb{C}^N+\dim\Sigma^{s-i}\times \{0\}=N+mn-(m-s+i)(n-s+i)<mn$$ if $i>1$. Thus $\xi_{(\alpha,t)}(\mathbb{C}^N)\cap (\Sigma^{s-i}\times\{0\})=\emptyset$.
Hence $Y_{(\alpha,t)}=\xi_{(\alpha,t)}^{-1}(\Sigma^{s-1}\times\{0\})$ is smooth and $C$ is proper.

\medskip
We show now the second part. Let $$\begin{array}{cccc}
\pi\colon & \xi^{-1}(\Sigma^{s-1}\times \{0\}) & \to &  W_0 \\
& (\alpha,t,x) & \mapsto & (\alpha,t).
\end{array}$$ 
We use the Ehresmann Fibration Theorem on manifolds with border (\cite{Ehresmann fibration}) to show that $\pi$ is a fibration. For this we prove first that $\pi$ is proper. Indeed, let $K\subset W_0$ be compact. Then $K$ is closed and bounded. We note $$\pi^{-1}(K)\subseteq K\times B\subseteq \xi^{-1}(\Sigma^{N-1}\times \{0\}).$$
Since $\pi\colon\xi^{-1}(\Sigma^{s-1}\times \{0\})\to W_0$ is continuous then $\pi^{-1}(K)$ is closed in $\xi^{-1}(\Sigma^{s-1}\times \{0\})$. Thus $\pi^{-1}(K)$ is closed in $K\times B$. Therefore $\pi^{-1}(K)$ is also compact.

Now we show that $\pi$ and $\partial \pi$ (the restriction of $\pi$ to $\partial(\xi^{-1}(\Sigma^{s-1}\times \{0\})$) are both submersions. In fact, for all 
$(\alpha,t)\in W_0$, $\xi_{(\alpha,t)}$ and $\partial\xi_{(\alpha,t)}$ are both transverse to $\Sigma^{s-1}\times \{0\}$.
But then it follows from the proof of the Transversality Theorem \cite{Guillemin Pollack} that $(\alpha,t)$ is a regular value of both  $\pi$ and $\partial \pi$.

Therefore, $\pi$ is a fibration over the connected set $W_0$. We have \begin{eqnarray*}
	\pi^{-1}(\alpha,t)&=&\{(\alpha,t,x)\,\, | \, \, (\alpha,t,x)\in \xi^{-1}(\Sigma^{s-1}\times\{0\})\}\\
	&=&\{(\alpha,t,x) \,\, | \, \,  \xi_{(\alpha,t)}(x)\in \Sigma^{s-1}\times\{0\}\}\\
	&=&\{(\alpha,t)\}\times Y_{(\alpha,t)}. \end{eqnarray*}
Thus, 
$\chi(Y_{(\alpha,t)})$ does not depend on $(\alpha,t)$ in $W_0$.

We consider now $\alpha \in W$ and $t\in D\setminus\{0\}$. Then $(\alpha,0)\in W_0$ and $(0,0,0,t)\in W_0$ because $Y_{(\alpha,0)}=Y_{\alpha}$  is smooth by \cite[Lemma 3.1]{Daiane}, $\rank(\psi_0(x)+A)=\rank(\psi(x)+A)=s-1$, for all $x\in Y_{\alpha}$. Moreover, $X_{(0,t)}\cap f_{(0,t)}=X_t\cap f_t^{-1}(0)$ is smooth and $\rank(\psi_t(x))=s-1$, for all $x\in X_t\cap f_t^{-1}(0)$.

Hence, \begin{eqnarray*}
	\nu(Y,0)&=&(-1)^{d-1}(\chi(Y_{\alpha})-1)\\
	&=&(-1)^{d-1}(\chi(X_A\cap f_b^{-1}(e))-1)\\
	&=&(-1)^{d-1}(\chi(X_{(A,0)}\cap f_{(b,0)}^{-1}(e))-1)\\
	&=&(-1)^{d-1}(\chi(X_{(0,t)}\cap f_{(0,t)}^{-1}(0))-1)\\
	&=&(-1)^{d-1}(\chi(Y_{t})-1).
\end{eqnarray*} \end{proof}

\begin{lem} \label{lema 4.1 bruna para a fibra}  With the notation of the proof of Lemma \ref{imitando o Teorema 3.4 Bruna},
there is a linear function $p\colon\mathbb{C}^N\to \mathbb{C}$ such that the set $W\subset \mathbb{C}^N\times M_{m,n}\times \mathbb{C}\times \mathbb{C}$ of points $(\alpha,t)=(b,A,e,t)$ in which $p|_{Y_{(\alpha,t)}}$ is a Morse function is a nonempty Zariski open set.
\end{lem}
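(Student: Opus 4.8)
The strategy is to mimic the transversality argument already used in Lemma \ref{imitando o Teorema 3.4 Bruna}, but now applied to a jet-type map that detects when the restriction of a linear form to the smooth fiber $Y_{(\alpha,t)}$ fails to be Morse. Recall that on the Zariski-open set $W_0$ produced in that proof, $Y_{(\alpha,t)}=\xi_{(\alpha,t)}^{-1}(\Sigma^{s-1}\times\{0\})$ is a smooth manifold and $\pi\colon\xi^{-1}(\Sigma^{s-1}\times\{0\})\to W_0$ is a submersive fibration. So the total space $\mathcal Y:=\xi^{-1}(\Sigma^{s-1}\times\{0\})$ (restricted over $W_0$) is a smooth manifold fibered over $W_0$ with fibers $Y_{(\alpha,t)}$. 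First I would set up, for a linear form $p\colon\C^N\to\C$, the relative "critical locus" map: the set of $(\alpha,t,x)\in\mathcal Y$ at which $dp|_{T_xY_{(\alpha,t)}}=0$. This is an analytic (in fact algebraic in the linear coordinates of $p$ and in $\alpha$) condition obtained from the vanishing of the appropriate maximal minors of the matrix whose rows are $dp$ and a spanning set of $N^*_xY_{(\alpha,t)}$, i.e. the differentials of the defining equations of $Y_{(\alpha,t)}$ read off from $\xi_{(\alpha,t)}$.

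Next I would invoke a parametrized transversality / Sard-type argument (the same Transversality Theorem \cite{Guillemin Pollack} already cited, together with the Jacobian-criterion analyticity from \cite[Theorem 4.3.15]{Pfister} and finiteness/properness from \cite[Theorem 3.4.24]{Pfister} and \cite[Theorem 2]{Grauert} as in the previous lemma) to conclude two things. First, for a generic linear $p$, the restriction $p|_{Y_{(\alpha,t)}}$ has only nondegenerate (Morse) critical points for $(\alpha,t)$ in a nonempty Zariski-open subset of $W_0$: the bad locus — where some critical point is degenerate — is cut out inside $\mathcal Y$ by the vanishing of $dp$ together with the vanishing of the Hessian determinant of $p|_{Y_{(\alpha,t)}}$ in local coordinates on the fiber, hence is analytic; projecting to the $(\alpha,t)$-factor and checking the fiber over the origin is a point (because $f$ has an isolated singularity, so $Y=X\cap f^{-1}(0)$ is compact-at-the-origin and a generic linear form on it has isolated, and generically Morse, critical points) gives finiteness, hence an analytic — and by dimension count proper — image. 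Complementing gives the desired Zariski-open $W$. I would fix one such $p$ once and for all; this is exactly the $p$ that will later be used to define $m_{d-1}(Y,0)$.

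The one point that needs genuine care — and which I expect to be the main obstacle — is that $Y_{(\alpha,t)}$ is not a complete intersection cut out by a fixed number of equations: it is the preimage $\xi_{(\alpha,t)}^{-1}(\Sigma^{s-1}\times\{0\})$ of the determinantal stratum, so "being Morse" must be phrased intrinsically on the smooth manifold $Y_{(\alpha,t)}$, not via a fixed Jacobian-plus-minors ideal as in the ICIS case. The clean way around this is to work on the smooth total space $\mathcal Y$ over $W_0$ (where everything is a genuine manifold) and to use the relative cotangent bundle / relative $1$-jet of $p|_{\mathcal Y}$ along the fibers of $\pi$: Morseness of $p|_{Y_{(\alpha,t)}}$ is transversality of the section $d_{\mathrm{rel}}(p|_{\mathcal Y})$ of $T^*_\pi\mathcal Y$ to the zero section, restricted to the fiber. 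Since $p$ ranges over the full $N$-dimensional space of linear forms and $d_{\mathrm{rel}}p$ depends linearly (hence submersively in $p$) on that parameter, the parametrized transversality theorem applies and yields Morseness for generic $p$ on an open dense set of $W_0$; the analyticity/Zariski-openness of the good locus $W$ then follows from the same coherence and finiteness arguments as in Lemma \ref{imitando o Teorema 3.4 Bruna}, using the appendix result that the degenerate-critical locus of a function on a variety is analytic. Finally one checks $W\neq\emptyset$ by exhibiting a single point: for $t\neq0$ small and generic $\alpha\in W$ (from Lemma \ref{imitando o Teorema 3.4 Bruna}), $Y_{(\alpha,t)}$ is already smooth, so a generic linear form restricts to a Morse function on it, placing $(\alpha,t)$ in $W$.
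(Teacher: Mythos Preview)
Your outline follows the same skeleton as the paper's proof: define the bad locus $\tilde C$ of pairs $(\alpha,t,x)$ where $x$ is singular on $Y_{(\alpha,t)}$ or a degenerate critical point of $p|_{Y_{(\alpha,t)}}$, invoke the appendix lemma for analyticity, check $\pi^{-1}(0)=\{0\}$ to get finiteness, push forward to an analytic $C$ via Grauert, and complement. The paper, however, is considerably more economical in how it selects $p$ and bypasses your relative-cotangent-bundle machinery entirely. It first fixes one smooth fiber $Y_{(\alpha_0,t_0)}$ (which exists by the previous lemma) and then cites \cite[Lemma~A.2]{Bruna 2} to choose a single linear form $p_a$ that is simultaneously Morse on $Y_{(\alpha_0,t_0)}$ and on $Y\setminus\{0\}$. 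These two requirements do all the work: Morseness on $Y\setminus\{0\}$ forces $\pi^{-1}(0)=\{0\}$ (the only bad point over the origin is the isolated singular point of $Y$), and Morseness on $Y_{(\alpha_0,t_0)}$ furnishes the explicit witness $(\alpha_0,t_0)\in W$.

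This last point exposes a small but genuine slip in your nonemptiness check. You write that for $t\neq 0$ and generic $\alpha$ the fiber $Y_{(\alpha,t)}$ is smooth, ``so a generic linear form restricts to a Morse function on it, placing $(\alpha,t)$ in $W$''. But at that stage $p$ has already been \emph{fixed}; the genericity of linear forms on a given smooth fiber says nothing about whether your particular $p$ is Morse there. The paper's order of operations---pick the smooth witness fiber first, then choose $p$ Morse on it---is exactly what closes this gap, and it also makes the parametrized transversality argument over the space of linear forms unnecessary.
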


\begin{proof} We take $(\alpha_0,t_0)=(b_0,A_0,e_0,t_0)\in \mathbb{C}^N\times M_{m,n}\times \mathbb{C}\times \mathbb{C}$ such that $Y_{(\alpha_0,t_0)}$ is smooth (there is $(\alpha_0,t_0)$ by the first part of proof of Lemma \ref{imitando o Teorema 3.4 Bruna}). We consider $a=(a_1,\ldots,a_N)\in \mathbb{C}^N$ and we denote by $p_a\colon\mathbb{C}^N\to \mathbb{C}$ the linear function $p_a(x_1,\ldots,x_N)=a_1x_1+\ldots+a_Nx_N$. By \cite[Lemma A.2]{Bruna 2}, taking $f\equiv 0$, we can choose a point $a\in \mathbb{C}^N$ such that $p_a|_{Y_{(\alpha_0,t_0)}}$ and $p_a|_{Y\setminus \{0\}}$ are both Morse functions. We choose one of these $p_a$'s.
	
	We define $\tilde{C}$ as the subset of points $(\alpha,t,x)\in \mathbb{C}^N\times M_{m,n}\times \mathbb{C}\times \mathbb{C}\times \mathbb{C}^N$ such that either $x$ is a singular point of $Y_{(\alpha,t)}$ or $x$ is a degenerate critical point of $p_a|_{Y_{(\alpha,t)}}$.
	By Lemma \ref{conjunto analitico}, $\tilde{C}$ is analytic.

	Let $$\begin{array}{cccc}
	\pi\colon & (\tilde{C},0) & \to & (\mathbb{C}^N\times M_{m,n}\times \mathbb{C}\times \mathbb{C},0) \\
	& (\alpha,t,x) & \mapsto & (\alpha,t).
	\end{array}$$
	
	It is easy to see that $\pi^{-1}(0)=\{0\}$ because if $x$ is a singular point of $Y$ then $x=0$ because $f$ has isolated singularity at $0$ and if $x$ is a degenerate point of $Y$ then $x=0$ because $p_a$ was chosen such that $p_a|_{Y\setminus \{0\}}$ is a Morse function. Then, by \cite[Theorem 3.4.24]{Pfister}, $\pi$ is a finite map. Thus the image $C:=\pi(\tilde{C})$ is analytic (see \cite[Theorem 2]{Grauert}). Therefore, $W=(\mathbb{C}^N\times M_{m,n}\times \mathbb{C}\times \mathbb{C})\setminus C$ is a Zariski open set. Since $(\alpha_0,t_0)\in W$ it is nonempty. \end{proof}

\begin{lem} \label{imitando o Lema 4.2 Bruna}  Let $p\colon\mathbb{C}^N\to \mathbb{C}$ be a generic linear function. Then, \begin{eqnarray*}
		\nu(Y\cap p^{-1}(0),0)&=&(-1)^{d-2}(\chi(Y\cap p^{-1}(c))-1)\\
		&=&(-1)^{d-2}(\chi(Y_{\alpha}\cap p^{-1}(0))-1)\\
		&=&(-1)^{d-2}(\chi(Y_{\alpha}\cap p^{-1}(c))-1),
	\end{eqnarray*} where $d=\dim(X,0)$, $\alpha=(b,A,e)\in \mathbb{C}^N\times M_{m,n}\times \mathbb{C}$ is generic, $c\in\C$ is generic and $Y_{\alpha}=X_A\cap f_b^{-1}(e)$.
	
\end{lem}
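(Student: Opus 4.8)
The plan is to establish the three equalities in Lemma \ref{imitando o Lema 4.2 Bruna} by a sequence of two-step arguments, each time passing between a ``total space'' fiber and a ``Milnor fiber'' of a generic linear form, exactly in the spirit of \cite[Lemma 4.2]{Bruna 2} (whose proof this lemma is imitating) and of Lemma \ref{imitando o Teorema 3.4 Bruna} above. First I would record what the statement means: $Y = X\cap f^{-1}(0)$ is a $(d-1)$-dimensional variety with isolated singularity at $0$, and $Y\cap p^{-1}(0)$ is its generic hyperplane section, which has dimension $d-2$; the quantity $\nu(Y\cap p^{-1}(0),0)$ is, by definition, $(-1)^{d-2}(\chi(\text{Milnor fibre of }Y\cap p^{-1}(0))-1)$. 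The Milnor fibre of $Y\cap p^{-1}(0)$ is obtained by smoothing the determinantal-type structure of $Y$ and intersecting with $p^{-1}(c)$ for generic small $c\neq 0$; the content of the lemma is that all the ``reasonable'' ways to realize this Euler characteristic agree.

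The key steps, in order, would be: (1) First equality $\nu(Y\cap p^{-1}(0),0)=(-1)^{d-2}(\chi(Y\cap p^{-1}(c))-1)$: since $p$ is generic, $p|_{Y\setminus\{0\}}$ has $0$ as its only (possibly) singular value nearby, and $Y\cap p^{-1}(c)$ for small generic $c\neq 0$ is smooth away from the locus where $\rank\psi_t$ drops too far; one then smooths the determinantal structure to pass to the genuine Milnor fibre, and invokes that the smoothing does not change the Euler characteristic --- this is precisely the type of argument run in Lemma \ref{imitando o Teorema 3.4 Bruna}, applied now to the hyperplane section $Y\cap p^{-1}(0)$ in place of $Y$, using that $\{p=c\}$ cuts $Y$ transversally off the origin. (2) Second equality $(-1)^{d-2}(\chi(Y\cap p^{-1}(c))-1)=(-1)^{d-2}(\chi(Y_\alpha\cap p^{-1}(0))-1)$: here I would use a one-parameter conservation/fibration argument: form the family interpolating between $Y$ (the special fibre) and $Y_\alpha = X_A\cap f_b^{-1}(e)$ (a smooth representative, for $\alpha$ generic), intersect the total space with $\{p=0\}$, and show via the Ehresmann fibration theorem --- with properness guaranteed by transversality of a large ball $\partial B$ and finiteness of the relevant projection, as in Lemma \ref{imitando o Teorema 3.4 Bruna} --- that $\chi$ of the slice is constant along the family; then identify the constant value with $\chi(Y\cap p^{-1}(c))$ at the special end and with $\chi(Y_\alpha\cap p^{-1}(0))$ at a generic end, using that for generic $\alpha$ the fibre $Y_\alpha$ is already smooth so $Y_\alpha\cap p^{-1}(0)$ is smooth as well. (3) Third equality $(-1)^{d-2}(\chi(Y_\alpha\cap p^{-1}(0))-1)=(-1)^{d-2}(\chi(Y_\alpha\cap p^{-1}(c))-1)$: since $Y_\alpha$ is smooth and $p$ is generic with respect to it (we chose $p=p_a$ with $p_a|_{Y\setminus\{0\}}$ and $p_a|_{Y_{\alpha_0,t_0}}$ Morse, via Lemma \ref{lema 4.1 bruna para a fibra}), the restriction $p|_{Y_\alpha}$ is a Morse function, so $p|_{Y_\alpha}$ defines a Lefschetz pencil and the fibres $Y_\alpha\cap p^{-1}(0)$ and $Y_\alpha\cap p^{-1}(c)$ are diffeomorphic (there is no critical value between $0$ and a generic $c$), hence have equal Euler characteristic; alternatively one deforms $p$ slightly so that $0$ is already a regular value.

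For bookkeeping I would choose the generic data coherently: pick $p=p_a$ as furnished by the proof of Lemma \ref{lema 4.1 bruna para a fibra} (so that $p_a|_{Y\setminus\{0\}}$ is Morse and, simultaneously, $p_a$ is generic for a smooth $Y_{\alpha_0,t_0}$), then choose $\alpha=(b,A,e)$ in the intersection $W\cap W'$ of the nonempty Zariski-open sets coming from Lemma \ref{lema 4.1 bruna para a fibra} and from the first part of the proof of Lemma \ref{imitando o Teorema 3.4 Bruna} (smoothness and the rank condition $\rank\psi_A|_{Y_\alpha}=s-1$), and finally choose $c\in\C$ small and generic so that $c$ is a regular value of $p|_{Y_\alpha}$ and of $p|_{Y\setminus\{0\}}$.

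The hard part will be step (2): making the interpolation between the ``special'' fibre $Y\cap p^{-1}(c)$ and the ``generic'' fibre $Y_\alpha\cap p^{-1}(0)$ into a genuine locally trivial fibration over a connected base. One must set up the right incidence variety $\xi^{-1}(\Sigma^{s-1}\times\{0\})\cap\{p=0\}$ (or $\{p-c=0\}$), verify that the relevant projection onto the parameter space is proper --- which forces a careful choice of a small ball $B$ with $\partial B$ transverse to all the strata of $Y$ and of $Y\cap p^{-1}(0)$, so that the border behaves well (Ehresmann on manifolds with boundary, as in \cite{Ehresmann fibration}) --- and that it is a submersion on the interior and on the boundary, which reduces to the transversality statements already proved in Lemma \ref{imitando o Teorema 3.4 Bruna} together with the genericity of $p$. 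Once the fibration is in place, the constancy of $\chi$ over the connected base and the identification of the endpoints is routine, and the signs $(-1)^{d-2}$ and the ``$-1$''s cancel uniformly, giving the three displayed equalities.
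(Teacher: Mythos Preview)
Your plan would work, but the paper's proof is much shorter because it exploits a single reduction you did not see. After a linear change of coordinates one may assume $p(x_1,\dots,x_N)=x_N$, and then every hyperplane $p^{-1}(c)$ is canonically identified with $\mathbb{C}^{N-1}$. Setting $\psi_c(x_1,\dots,x_{N-1})=\psi(x_1,\dots,x_{N-1},c)$ and $Z=\psi_0^{-1}(M_{m,n}^s)$ (a $(d-1)$-dimensional IDS in $\mathbb{C}^{N-1}$), the four sets in the statement are identified with $Z\cap f^{-1}(0)$, $Z_c\cap f^{-1}(0)$, $Z_A\cap f_b^{-1}(e)$ and $Z_{(A,c)}\cap f_{(b,c)}^{-1}(e)$ respectively. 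These are all fibres of one family parametrized by $(\alpha,c)\in\mathbb{C}^N\times M_{m,n}\times\mathbb{C}\times\mathbb{C}$, and the argument of Lemma~\ref{imitando o Teorema 3.4 Bruna} applies verbatim (with $c$ now playing the role of the parameter $t$) to give all three equalities at once.

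Compared with this, your three-step approach redoes the fibration argument separately for each equality. It is correct in outline, but two points deserve care. In your step~(1) you say $Y\cap p^{-1}(c)$ is ``smooth away from the locus where $\rank\psi_t$ drops too far'' and then propose to smooth further: in fact $Y\cap p^{-1}(c)$ is already smooth for generic $c\ne 0$ (since $Y$ has isolated singularity at $0$ and $0\notin p^{-1}(c)$), so the content of~(1) is precisely that this particular smoothing computes $\nu$, which is a genuine Lemma~\ref{imitando o Teorema 3.4 Bruna}-type statement, not a triviality. In your step~(3) you need $0$ to be a regular value of $p|_{Y_\alpha}$; this does not follow from $p|_{Y_\alpha}$ being Morse, and your suggested fix of perturbing $p$ destroys the condition $p(0)=0$ used elsewhere. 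In the paper's approach this issue evaporates: smoothness of $Y_\alpha\cap p^{-1}(0)$ is just the statement that $(\alpha,0)$ lies in the Zariski-open good locus $W_0$ of the combined family, which holds for generic~$\alpha$.
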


\begin{proof} By taking an appropriate linear change of coordinates, we may assume  $p(x_1,\ldots,x_N)=x_N$. We write for each $c\in \mathbb{C}\setminus \{0\}$ $$\psi_c(x_1,\ldots,x_{N-1})=\psi(x_1,\ldots,x_{N-1},c).$$
	
We have the following identifications:
\begin{enumerate}

\item $(Y\cap p^{-1}(0),0)$ corresponds to $(Z\cap f^{-1}(0),0)$, where $(Z,0)=(\psi_0^{-1}(M_{m,n}^s),0)$;

\item $Y_{\alpha}\cap p^{-1}(0)$ corresponds to $Z_A\cap f_b^{-1}(e)$, where $Z_A=(\psi_0+A)^{-1}(M_{m,n}^s)$;

\item $Y\cap p^{-1}(c)$  is homeomorphic to $Z_c\cap f^{-1}(0)$, where $Z_c=(\psi_c)^{-1}(M_{m,n}^s)$;

\item $Y_{\alpha}\cap p^{-1}(c)$ is isomorphic to $Z_{(A,c)}\cap f_{(b,c)}^{-1}(e)$, where $Z_{(A,c)}=(\psi_c+A)^{-1}(M_{m,n}^s)$.

\end{enumerate}

Using the same arguments of the proof of Lemma \ref{imitando o Teorema 3.4 Bruna} we have the desired result. \end{proof}

Let $p\colon\mathbb{C}^N\to \mathbb{C}$ be a generic linear function. We have that $(X\cap p^{-1}(0),0)$ is also an IDS in the hyperplane $p^{-1}(0)$. 
Moreover $(X\cap p^{-1}(0))_A=X_A\cap p^{-1}(0)$. Indeed, with the notations of Lemma \ref{imitando o Lema 4.2 Bruna}, $$(X\cap p^{-1}(0))_A=(\psi_0+A)^{-1}(M_{m,n}^s)=(\psi+A)^{-1}(M_{m,n}^s)\cap p^{-1}(0)=X_A\cap p^{-1}(0).$$

Applying \cite[Definition 5.3]{Bruna 2} to the fiber of the IDS $(X\cap p^{-1}(0),0)$ we have $$\nu(X\cap p^{-1}(0)\cap f^{-1}(0),0):=(-1)^{d-2}(\chi(X_A\cap p^{-1}(0)\cap f_b^{-1}(e))-1),$$ with $(b,A,e)\in\mathbb{C}^N\times M_{m,n}\times
\mathbb{C}$ generic values such that $X_A\cap p^{-1}(0)$ is smooth,
$f_b|_{X_A\cap p^{-1}(0)}$ is a Morse function and $e$ is a regular value of
$f_b|_{X_A\cap p^{-1}(0)}$. 

\begin{teo}\label{imitando o Teorema 4.3 Bruna} Let $f\colon(X,0)\to(\C,0)$ be a function with isolated singularity on an IDS $(X,0)$.  Let $p\colon\mathbb{C}^N\to \mathbb{C}$ be a generic linear function, $\alpha=(b,A,e)\in \mathbb{C}^N\times M_{m,n}\times\mathbb{C}$ generic and $Y_{\alpha}=X_A\cap f_b^{-1}(e)$. Then, $$\sharp S(p|_{Y_{\alpha}})=\nu(Y,0)+\nu(Y\cap p^{-1}(0),0).$$  \end{teo}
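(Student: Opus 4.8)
The plan is to imitate the proof of the L\^e--Greuel type formula for IDS in \cite[Theorem 4.3]{Bruna 2}, exploiting the fact that, although the fibre $Y=X\cap f^{-1}(0)$ is only a ``nested'' determinantal singularity, its generic perturbation $Y_{\alpha}=X_A\cap f_b^{-1}(e)$ is an honest smooth complex manifold of dimension $d-1$, so ordinary complex Morse theory applies to it without any stratification. First I would fix a good representative: choose $\epsilon>0$ small and $\psi\colon B_{\epsilon}\to M_{m,n}$ so that $0$ is the only singular point of $X$ and of $Y$ in $B_{\epsilon}$ and $\partial B_{\epsilon}$ is transverse to both $X$ and $Y$, as in the proof of Lemma~\ref{imitando o Teorema 3.4 Bruna}. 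By Lemma~\ref{lema 4.1 bruna para a fibra} I can then pick the linear form $p$ and the generic parameter $\alpha=(b,A,e)$ so that $Y_{\alpha}$ is smooth, $p|_{Y_{\alpha}}$ is Morse, $p^{-1}(0)$ is transverse to $Y_{\alpha}$, and (after shrinking $\epsilon$ and then the radius of a disc $D$ in the target of $p$) all the finitely many critical values of $p|_{Y_{\alpha}}$ lie in $D$ while $\partial B_{\epsilon}\cap Y_{\alpha}$ carries no critical point of $p$; thus $p|_{Y_{\alpha}}$ is a locally trivial fibration over $D$ minus its critical values.

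The heart of the argument is the following consequence of complex Morse theory: $Y_{\alpha}$ has the homotopy type of the generic fibre $Y_{\alpha}\cap p^{-1}(c)$ ($c\in D$ generic) with exactly one cell of real dimension $d-1$ attached for each point of $S(p|_{Y_{\alpha}})$. Indeed, each such critical point is nondegenerate with local normal form $w_1^2+\dots+w_{d-1}^2$, whose local Milnor fibre is homotopy equivalent to $S^{d-2}$, so crossing the corresponding critical value attaches a single $(d-1)$-cell; this is precisely the step carried out in \cite[Theorem 4.3]{Bruna 2}, the only difference being that here it is applied to the smooth manifold-with-boundary $Y_{\alpha}$, with boundary $Y_{\alpha}\cap\partial B_{\epsilon}$. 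Taking Euler characteristics yields
\[
\chi(Y_{\alpha})=\chi\bigl(Y_{\alpha}\cap p^{-1}(c)\bigr)+(-1)^{d-1}\,\sharp S(p|_{Y_{\alpha}}).
\]

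Finally I would substitute the two Euler characteristics. By the definition of the vanishing Euler characteristic of the fibre (see \cite[Definition 5.3]{Bruna 2} and Lemma~\ref{imitando o Teorema 3.4 Bruna}) one has $\chi(Y_{\alpha})=1+(-1)^{d-1}\nu(Y,0)$, and by Lemma~\ref{imitando o Lema 4.2 Bruna} one has $\chi(Y_{\alpha}\cap p^{-1}(c))=1+(-1)^{d-2}\nu(Y\cap p^{-1}(0),0)$. Plugging these in and simplifying the signs (using that $2d-3$ is odd) gives
\[
\sharp S(p|_{Y_{\alpha}})=(-1)^{d-1}\bigl[(-1)^{d-1}\nu(Y,0)-(-1)^{d-2}\nu(Y\cap p^{-1}(0),0)\bigr]=\nu(Y,0)+\nu(Y\cap p^{-1}(0),0),
\]
which is the claim. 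I expect the main obstacle to be the Morse-theoretic cell-attachment statement together with the bookkeeping at the boundary $Y_{\alpha}\cap\partial B_{\epsilon}$: one must choose the representative carefully (first $\epsilon$, then the disc in the target of $p$) so that no critical point escapes to the boundary and $p|_{Y_{\alpha}}$ restricts to a fibration away from its critical values. Once this is arranged, the argument is the classical one, already used in \cite[Theorem 4.3]{Bruna 2}.
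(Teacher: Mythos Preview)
Your argument is correct and coincides with the paper's proof. The only cosmetic difference is that the paper packages your Morse-theoretic cell-attachment step as a single citation to \cite[Theorem~A.5]{Bruna 2 erratum}, obtaining directly $\sharp S(p|_{Y_{\alpha}})=(-1)^{d}\bigl(\chi(Y_{\alpha}\cap p^{-1}(c))-\chi(Y_{\alpha})\bigr)$, and then substitutes via Lemma~\ref{imitando o Lema 4.2 Bruna} exactly as you do; your version simply unpacks that citation.
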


\begin{proof} We choose $\alpha=(b,A,e)\in \mathbb{C}^N\times M_{m,n}\times\mathbb{C}$ generic such that $Y_{\alpha}$ is smooth and $p|_{Y_{\alpha}}$ is a Morse function (there is $\alpha$ by \cite[Lemma 3.1]{Daiane} and by the proof of Lemma \ref{lema 4.1 bruna para a fibra} for $t=0$). Let $c\in \mathbb{C}$ be a regular value of $p|_{Y_{\alpha}}$. By \cite[Theorem A.5]{Bruna 2 erratum} and Lemma \ref{imitando o Lema 4.2 Bruna}, \begin{eqnarray*}
		\sharp S(p|_{Y_{\alpha}})&=&(-1)^d(\chi(p^{-1}(c)\cap Y_{\alpha})-\chi(Y_{\alpha}))\\
		&=&(-1)^d(\chi(p^{-1}(c)\cap Y_{\alpha})-1+1-\chi(Y_{\alpha}))\\
		&=&(-1)^d((-1)^{d-2}\nu(Y\cap p^{-1}(0),0)+(-1)(-1)^{d-1}\nu(Y,0))\\
		&=&\nu(Y\cap p^{-1}(0),0)+\nu(Y,0).
	\end{eqnarray*} \end{proof}
	
We remark that $\sharp S(p|_{Y_{\alpha}})$ depends neither on the chosen linear function $p$ nor on $\alpha$. 
	Indeed, by Lê and Teissier \cite{Le Teissier} and Lemma \ref{imitando o Lema 4.2 Bruna} we have $$Eu(Y,0)=\chi(Y\cap p^{-1}(t))=(-1)^{d-2}\nu(Y\cap p^{-1}(0),0)+1,$$ where $Eu(Y,0)$ is the local Euler obstruction. 
By Theorem \ref{imitando o Teorema 4.3 Bruna} $$\sharp S(p|_{Y_{\alpha}})=\nu(Y,0)+(-1)^{d-2}Eu(Y,0)+(-1)^{d-1}.$$ 
Therefore, we can finally write the definition of the $(d-1)$th polar multiplicity of the fiber $(Y,0)$.

\begin{defi} With the notation of Theorem \ref{imitando o Teorema 4.3 Bruna}, we define the \emph{$(d-1)$th polar multiplicity} of $(Y,0)$ as
 $$m_{d-1}(Y,0):=\sharp S(p|_{Y_{\alpha}}),$$ where $p\colon\mathbb{C}^N\to \mathbb{C}$ is a generic linear function, $\alpha=(b,A,e)\in \mathbb{C}^N\times M_{m,n}\times\mathbb{C}$ is generic and $d$ is the dimension of $(X,0)$.
 \end{defi}

In the case of an ICIS of dimension $d$, Jorge Perez and Saia in \cite{Jorge Perez e Saia} prove that $$1+(-1)^d\mu(X,0)=\sum_{i=0}^d(-1)^im_i(X,0).$$
Since for $f\colon(X,0)\rightarrow (\mathbb{C},0)$ a holomorphic function germ with an isolated singularity we have $(X\cap f^{-1}(0),0)$ is an ICIS too, of dimension $d-1$, then we can apply the above formula to it.

In \cite{Bruna 2} it is proved a similar result for an IDS $(X,0)$ of dimension $d$, that is, $$Eu(X,0)+(-1)^{d}m_d(X,0)=1+(-1)^{d}\nu(X,0).$$
We have the following result that guarantees us that this formula is also valid for the fiber of a function germ with isolated singularity defined on an IDS. 

\begin{cor} \label{formula para a obstrucao de euler da fibra} Let $(X,0)$ be an IDS, $f\colon(X,0)\rightarrow (\mathbb{C},0)$ a holomorphic function germ with an isolated singularity and $Y=X\cap f^{-1}(0)$. Then, $$Eu(Y,0)+(-1)^{d-1}m_{d-1}(Y,0)=1+(-1)^{d-1}\nu(Y,0).$$ \end{cor}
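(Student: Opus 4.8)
The plan is to derive the formula by directly combining Theorem~\ref{imitando o Teorema 4.3 Bruna}, which gives $m_{d-1}(Y,0)=\sharp S(p|_{Y_\alpha})=\nu(Y,0)+\nu(Y\cap p^{-1}(0),0)$, with the Lê--Teissier description of the local Euler obstruction $Eu(Y,0)$ already recalled in the remark following that theorem, namely
\[
Eu(Y,0)=\chi(Y\cap p^{-1}(t))=(-1)^{d-2}\nu(Y\cap p^{-1}(0),0)+1.
\]
From the second equation we solve for the ``hyperplane slice'' invariant:
\[
\nu(Y\cap p^{-1}(0),0)=(-1)^{d-2}\bigl(Eu(Y,0)-1\bigr)=(-1)^{d}\bigl(Eu(Y,0)-1\bigr).
\]
Substituting this into the formula of Theorem~\ref{imitando o Teorema 4.3 Bruna} gives
\[
m_{d-1}(Y,0)=\nu(Y,0)+(-1)^{d}\bigl(Eu(Y,0)-1\bigr),
\]
and a rearrangement, multiplying through by $(-1)^{d-1}$ and moving $Eu(Y,0)$ to the left, yields exactly
\[
Eu(Y,0)+(-1)^{d-1}m_{d-1}(Y,0)=1+(-1)^{d-1}\nu(Y,0),
\]
which is the claimed identity.

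**Organizing the steps:** first I would recall explicitly the definition $m_{d-1}(Y,0):=\sharp S(p|_{Y_\alpha})$ and the identity $\sharp S(p|_{Y_\alpha})=\nu(Y,0)+\nu(Y\cap p^{-1}(0),0)$ from Theorem~\ref{imitando o Teorema 4.3 Bruna}. Second, I would invoke Lemma~\ref{imitando o Lema 4.2 Bruna} together with the classical fact (Lê--Teissier, \cite{Le Teissier}) that $Eu(Y,0)$ equals the Euler characteristic of a generic hyperplane slice of $Y$, to obtain $Eu(Y,0)=(-1)^{d-2}\nu(Y\cap p^{-1}(0),0)+1$; note $(-1)^{d-2}=(-1)^d$, which I would use freely. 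Third, solve this for $\nu(Y\cap p^{-1}(0),0)$, substitute, and collect signs. The computation is short and purely formal once the two input equations are in place, so the corollary is essentially a bookkeeping consequence of the preceding theorem and the Euler obstruction formula.

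**The main obstacle** is not really a difficulty but a matter of care: one must make sure that the genericity conditions on $\alpha=(b,A,e)$ and on the linear form $p$ in Theorem~\ref{imitando o Teorema 4.3 Bruna} and in the Lê--Teissier formula for $Eu(Y,0)$ can be satisfied \emph{simultaneously}, so that the same $Y_\alpha$ and the same generic slice appear in both inputs; this is guaranteed because each genericity condition excludes only a proper analytic (indeed Zariski-closed) subset, as established in Lemmas~\ref{imitando o Teorema 4.3 Bruna}, \ref{lema 4.1 bruna para a fibra}, and \ref{imitando o Lema 4.2 Bruna}, and the intersection of finitely many nonempty Zariski-open sets is again nonempty Zariski-open. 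The only other point to watch is the sign parity $(-1)^{d-2}=(-1)^d=(-1)^{d-1}\cdot(-1)$, used when passing from the slice formula to the final expression; keeping track of this carefully is all that the proof requires.
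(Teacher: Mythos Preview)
Your proposal is correct and follows exactly the approach the paper takes: the corollary is stated without a separate proof because the derivation is already contained in the remark immediately preceding the definition of $m_{d-1}(Y,0)$, where the paper combines Theorem~\ref{imitando o Teorema 4.3 Bruna} with the L\^e--Teissier identity $Eu(Y,0)=(-1)^{d-2}\nu(Y\cap p^{-1}(0),0)+1$ to obtain $\sharp S(p|_{Y_{\alpha}})=\nu(Y,0)+(-1)^{d-2}Eu(Y,0)+(-1)^{d-1}$, which is your intermediate formula. Your additional comment on simultaneous genericity is a welcome clarification, but otherwise the argument is the same sign-chasing rearrangement.
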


With this we get for $(Y,0)$ a generalization of the following formula proved in \cite{Bruna 3} for an IDS $(X,0)$ $d$-dimensional $$\nu(X,0)=(-1)^{d}(\sum_{i=0}^{d}(-1)^{i}m_i(X,0)-1).$$

\begin{cor}\label{soma alternada das multiplicidades} Let $(X,0)$ be an IDS, $f\colon(X,0)\rightarrow (\mathbb{C},0)$ a holomorphic function germ with an isolated singularity and $Y=X\cap f^{-1}(0)$. Then, $$\nu(Y,0)=(-1)^{d-1}(\sum_{i=0}^{d-2}(-1)^{d-i-2}m_i(Y,0)-1)+m_{d-1}(Y,0).$$ \end{cor}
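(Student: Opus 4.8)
The statement is a corollary, so the plan is to read it off from Corollary~\ref{formula para a obstrucao de euler da fibra} by expanding the local Euler obstruction $Eu(Y,0)$ as an alternating sum of polar multiplicities. First I would rearrange Corollary~\ref{formula para a obstrucao de euler da fibra}: multiplying $Eu(Y,0)+(-1)^{d-1}m_{d-1}(Y,0)=1+(-1)^{d-1}\nu(Y,0)$ by $(-1)^{d-1}$ and using $\bigl((-1)^{d-1}\bigr)^{2}=1$ gives
\[
\nu(Y,0)=(-1)^{d-1}\bigl(Eu(Y,0)-1\bigr)+m_{d-1}(Y,0).
\]
The same identity also follows directly from the displayed formula $\sharp S(p|_{Y_{\alpha}})=\nu(Y,0)+(-1)^{d-2}Eu(Y,0)+(-1)^{d-1}$ preceding the definition of $m_{d-1}(Y,0)$, since $m_{d-1}(Y,0)=\sharp S(p|_{Y_{\alpha}})$ by that definition.

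Next I would insert the L\^e--Teissier formula for the local Euler obstruction. The germ $(Y,0)=(X\cap f^{-1}(0),0)$ is reduced and purely $(d-1)$-dimensional, and $m_0(Y,0),\dots,m_{d-2}(Y,0)$ are its ordinary polar multiplicities (as recalled earlier in the paper), so by \cite{Le Teissier}
\[
Eu(Y,0)=\sum_{i=0}^{d-2}(-1)^{i}m_i(Y,0).
\]
Substituting this into the previous display and splitting the top polar multiplicity back into the alternating sum gives
\[
\nu(Y,0)=(-1)^{d-1}\Bigl(\sum_{i=0}^{d-2}(-1)^{i}m_i(Y,0)-1\Bigr)+m_{d-1}(Y,0)=(-1)^{d-1}\Bigl(\sum_{i=0}^{d-1}(-1)^{i}m_i(Y,0)-1\Bigr),
\]
which is the claimed identity after a routine reorganization of signs; this also exhibits it, as announced, as the exact fiber analogue of $\nu(X,0)=(-1)^{d}\bigl(\sum_{i=0}^{d}(-1)^{i}m_i(X,0)-1\bigr)$ from \cite{Bruna 3}.

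Since all the ingredients are already available, there is essentially no obstacle beyond making sure the L\^e--Teissier expansion of $Eu(Y,0)$ is used in the normalization compatible with the sign conventions of \cite{Bruna 2} and \cite{Bruna 3}. If one wishes to stay entirely within the present paper and not invoke \cite{Le Teissier} for $Y$ directly, one can instead argue by induction on $d$: the base case $d=1$ is immediate, since then $Y_{\alpha}$ is a finite set and $\nu(Y,0)=m_0(Y,0)-1$; for the inductive step one uses $\nu(Y,0)=m_{d-1}(Y,0)-\nu(Y\cap p^{-1}(0),0)$ (from Theorem~\ref{imitando o Teorema 4.3 Bruna} and the definition of $m_{d-1}(Y,0)$), identifies $Y\cap p^{-1}(0)$ with the fiber of the IDS $X\cap p^{-1}(0)$ of dimension $d-1$ (as recorded just before the definition of $m_{d-1}(Y,0)$), and applies the inductive hypothesis. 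In this route the delicate step is the stability of polar multiplicities under a generic hyperplane section, $m_i(Y\cap p^{-1}(0),0)=m_i(Y,0)$ for $i=0,\dots,d-2$: for $i\le d-3$ this is classical, but at the top index $i=d-2$ one must verify that the recursively defined top polar multiplicity of the fiber $Y\cap p^{-1}(0)$ coincides with the ordinary $(d-2)$-th polar multiplicity of the one-dimension-larger fiber $Y$.
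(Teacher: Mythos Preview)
Your approach is exactly the paper's: rearrange Corollary~\ref{formula para a obstrucao de euler da fibra} to $\nu(Y,0)=(-1)^{d-1}\bigl(Eu(Y,0)-1\bigr)+m_{d-1}(Y,0)$ and then substitute the L\^e--Teissier expression for $Eu(Y,0)$. The only discrepancy is the normalization you already flagged: the paper quotes $Eu(Y,0)=\sum_{i=0}^{d-2}(-1)^{d-i-2}m_i(Y,0)$ from \cite{Le Teissier} rather than your $\sum_{i=0}^{d-2}(-1)^{i}m_i(Y,0)$, and it is that form which lands directly on the sign pattern of the stated identity (your ``routine reorganization of signs'' does not bridge the two when $d$ is odd).
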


\begin{proof} By Corollary \ref{formula para a obstrucao de euler da fibra}, $$\nu(Y,0)=(-1)^{d-1}Eu(Y,0)+m_{d-1}(Y,0)+(-1)^{d}.$$
	
	By \cite{Le Teissier}, $$Eu(Y,0)=\sum_{i=0}^{d-2}(-1)^{d-i-2}m_i(Y,0).$$

	Therefore, $$\nu(Y,0)=(-1)^{d-1}(\sum_{i=0}^{d-2}(-1)^{d-i-2}m_i(Y,0)-1)+m_{d-1}(Y,0).$$ \end{proof}

Since $\nu(X,0)$ and $\nu(Y,0)$ can be written in terms of the multiplicities $m_i(X,0)$, $i=0,\ldots,d$, and $m_k(Y,0)$, $k=0,\ldots,d-1$, respectively, then the Lê-Greuel formula implies the following result.

\begin{cor} \label{multiplicidades constantes implicam F cte} Let $F=\{f_t\colon(X_t,0)\to (\C,0)\}_{t\in D}$ be a family of functions with isolated singularitiy on IDS. If $m_i(X_t,0)$, $i=0,\ldots,d$, and $m_k(Y_t,0)$, $k=0,\ldots,d-1$, are constant on $t\in D$, then $F$ is $\mu$-constant. \end{cor}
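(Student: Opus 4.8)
The plan is to combine the Lê--Greuel formula (\ref{Le-Greuel}), $\mu(f_t)=\nu(X_t,0)+\nu(Y_t,0)$, with the two ``alternating sum'' expressions already available: the formula for IDS $\nu(X_t,0)=(-1)^{d}\big(\sum_{i=0}^{d}(-1)^{i}m_i(X_t,0)-1\big)$ recalled just above Corollary \ref{multiplicidades constantes implicam F cte}, and the formula for the fiber $\nu(Y_t,0)=(-1)^{d-1}\big(\sum_{i=0}^{d-2}(-1)^{d-i-2}m_i(Y_t,0)-1\big)+m_{d-1}(Y_t,0)$ from Corollary \ref{soma alternada das multiplicidades}. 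The point is that each of these two expressions for $\nu(X_t,0)$ and $\nu(Y_t,0)$ is a fixed universal polynomial (indeed an integer linear combination, with coefficients $\pm1$ and a constant term) in the polar multiplicities of $(X_t,0)$, resp.\ of $(Y_t,0)$.

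First I would fix a good representative of the family $F$ defined over $D\times U$; the hypothesis that the $f_t$ have isolated singularity together with Theorem \ref{formula do numero de milnor} tells us that $\mu(f)=\sum_{z\in S(f_t)}\mu(f_t,z)$, but for the purpose of this corollary it is cleanest simply to apply the Lê--Greuel formula at the origin for each parameter value $t$: $\mu(f_t)=\nu(X_t,0)+\nu(Y_t,0)$, which holds because $f_t\colon(X_t,0)\to(\C,0)$ itself is a function with isolated singularity on an IDS (for $t$ small). Then I substitute the two alternating-sum expressions above into the right-hand side, obtaining
\[
\mu(f_t)=(-1)^{d}\Big(\sum_{i=0}^{d}(-1)^{i}m_i(X_t,0)-1\Big)+(-1)^{d-1}\Big(\sum_{i=0}^{d-2}(-1)^{d-i-2}m_i(Y_t,0)-1\Big)+m_{d-1}(Y_t,0).
\]
By hypothesis every term on the right-hand side, namely $m_i(X_t,0)$ for $i=0,\dots,d$ and $m_k(Y_t,0)$ for $k=0,\dots,d-1$, is independent of $t\in D$; hence the whole right-hand side is constant in $t$, and therefore $\mu(f_t)=\mu(f_0)=\mu(f)$ for all $t$ small, which is exactly the statement that $F$ is $\mu$-constant.

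There is essentially no obstacle here: the corollary is a formal consequence of already-proved identities, and the only points that require a word of care are (i) checking that $(Y_t,0)=(X_t\cap f_t^{-1}(0),0)$ genuinely is the fiber of a function with isolated singularity on an IDS for each small $t$, so that Corollary \ref{soma alternada das multiplicidades} and the Lê--Greuel formula apply uniformly along the family --- this follows from choosing a good representative as above --- and (ii) making sure the $\nu(X_t,0)$ formula is invoked for each $t$, not just at $t=0$, which again is immediate since each $(X_t,0)$ is an IDS of the same type. So the proof is two lines: write the Lê--Greuel formula, substitute the two polar-multiplicity expansions, and observe that a fixed $\Z$-linear combination of quantities that are individually constant in $t$ is itself constant in $t$.
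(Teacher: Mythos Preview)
Your proposal is correct and follows essentially the same approach as the paper: the paper's entire argument is the sentence preceding the corollary, namely that $\nu(X_t,0)$ and $\nu(Y_t,0)$ are fixed integer linear combinations of the polar multiplicities, so by the L\^e--Greuel formula $\mu(f_t)=\nu(X_t,0)+\nu(Y_t,0)$ is constant. One small remark: your aside about ``choosing a good representative'' is unnecessary and slightly misleading, since goodness is not part of the hypothesis here; the fact that each $f_t$ has isolated singularity on an IDS $(X_t,0)$ is already built into the statement of the corollary, so the L\^e--Greuel formula and Corollary~\ref{soma alternada das multiplicidades} apply to each $t$ directly.
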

	
	Our goal now is to prove that if $m_k(Y_t,0)$ are constant, for all $k=1,\ldots,d-1$, then the pair $(F^{-1}(\mathbb{C}\times\{0\})\setminus(\mathbb{C}\times\{0\}),\mathbb{C}\times\{0\})$ satisfies the Whitney conditions. For this we need to extend some results of \cite{Bruna 3} for the fiber $(Y,0)$.

	\begin{lem}\label{formula para a m} Let $F\colon(\mathcal X,0)\to(\C\times\C,0)$ be any unfolding of a function $f\colon(X,0)\to(\C,0)$ with isolated singularity on an IDS $(X,0)$. For all $t$ small enough, $$m_{d-1}(Y,0)=\displaystyle\sum_{y\in S(p|_{Y_t^0})}\mu(p|_{Y_t},y)+\sum_{z\in S({Y_t})}m_{d-1}(Y_t,z),$$
	where $S(Y_t)$ is the singular locus of $Y_t$ and $Y_t^0=Y_t\setminus S(Y_t)$.
		
	\end{lem}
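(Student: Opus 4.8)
The strategy is to mimic the conservation-of-multiplicity argument for $m_d(X_t,0)$ from \cite[Lemma 5.1]{Bruna 3} (and the proof of Theorem \ref{formula do numero de milnor} above), but now applied to the generic linear function $p$ restricted to the fiber $Y_t=X_t\cap f_t^{-1}(0)$. Recall from the Definition that $m_{d-1}(Y,0)=\sharp S(p|_{Y_\alpha})$, where $Y_\alpha=X_A\cap f_b^{-1}(e)$ for generic $\alpha=(b,A,e)$. So the left-hand side is literally the number of critical points of $p|_{Y_\alpha}$ on a smoothing of the original fiber. The idea is to build a simultaneous deformation that both smooths $Y_t$ (via the generic parameters $\alpha$) and carries the family parameter $t$, and then to localize the critical points of $p$ on this smoothing around the finitely many ``bad'' points of $Y_t$: the singular points $z\in S(Y_t)$ and the critical points $y\in S(p|_{Y_t^0})$ of $p$ on the smooth part.

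First I would set up, exactly as in the proof of Lemma \ref{imitando o Teorema 3.4 Bruna}, the combined deformation $X_{(A,t)}=(\psi_t+A)^{-1}(M_{m,n}^s)$ and $f_{(b,t)}|_{X_{(A,t)}}$, giving $Y_{(\alpha,t)}=X_{(A,t)}\cap f_{(b,t)}^{-1}(e)$, and invoke the Zariski-open set $W_0$ produced there over which $Y_{(\alpha,t)}$ is smooth with $\rank(\psi_t(x)+A)=s-1$. Then, enlarging $W_0$ if necessary using Lemma \ref{lema 4.1 bruna para a fibra} (and Lemma \ref{conjunto analitico} from the appendix for analyticity of the degenerate-critical locus), I can assume in addition that $p|_{Y_{(\alpha,t)}}$ is a Morse function for $(\alpha,t)$ in this set; in particular the number $\sharp S(p|_{Y_{(\alpha,t)}})$ is locally constant, hence constant, on the connected set $W_0$, and for $\alpha$ generic and $t=0$ it equals $m_{d-1}(Y,0)$.

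Next, fix a small $t\neq 0$ and enumerate $S(Y_t)=\{z_1,\dots,z_k\}$ and $S(p|_{Y_t^0})=\{y_1,\dots,y_l\}$ — these are finite sets because $f_t$ has isolated singularity and $p$ is generic (the same reasoning as in the finiteness of the $\pi$-fibres in the cited lemmas). Choose pairwise disjoint Milnor balls: $D_i$ a Milnor ball for $(Y_t,z_i)$ (inside which, for $(A,b,e)$ small and generic, the local smoothing $Y_{(\alpha,t)}\cap D_i$ carries exactly $m_{d-1}(Y_t,z_i)$ critical points of $p$, by the very definition of $m_{d-1}$ applied to the germ $(Y_t,z_i)$), and $E_j$ a ball around $y_j$ on which $p|_{Y_t^0}$ has an isolated critical point contributing $\mu(p|_{Y_t},y_j)$ to the count after a generic perturbation (this is the classical fact that a generic linear perturbation of a function with an isolated critical point on a smooth space has exactly $\mu$ Morse points nearby). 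Away from $\bigcup D_i\cup\bigcup E_j$ the restriction $p|_{Y_t}$ is a submersion on the smooth part, so no critical points escape; by properness/compactness (a Rouché-type count, or the Ehresmann argument already used) the total count $m_{d-1}(Y,0)=\sharp S(p|_{Y_{(\alpha,t)}})$ splits as $\sum_j \mu(p|_{Y_t},y_j)+\sum_i m_{d-1}(Y_t,z_i)$, which is the claimed formula.

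The main obstacle is the local statement at a singular point $z_i$ of $Y_t$: one must check that a generic global perturbation $(\psi_t+A,\,f_t+b,\,e)$ restricts, near $z_i$, to a \emph{generic} determinantal-type smoothing of the germ $(Y_t,z_i)$ in the sense required by the Definition of $m_{d-1}$, so that exactly $m_{d-1}(Y_t,z_i)$ critical points of $p$ appear there. This is where the ``nested determinantal'' nature of $Y_t$ matters: $(Y_t,z_i)=(X_t\cap f_t^{-1}(0),z_i)$ is the fibre of a function on the IDS germ $(X_t,z_i)$, and one needs that the slices of the transversality/Zariski-open conditions of Lemmas \ref{imitando o Teorema 3.4 Bruna} and \ref{lema 4.1 bruna para a fibra} are still generic when localized — i.e. that the generic $(\alpha,t)\in W_0$ also lies in the analogous $W_0$'s built around each $z_i$. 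Since each such condition is the complement of a proper analytic subset and there are finitely many points $z_i$, a finite intersection of Zariski-open dense sets settles this; the bookkeeping (that the local perturbation is of the prescribed form $\psi+A$, $f+b$) is exactly parallel to the ``sum of deformations'' device already used above, so no genuinely new difficulty arises beyond carefully matching the local and global genericity sets.
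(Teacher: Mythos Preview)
Your proposal is correct and follows essentially the same approach as the paper: set up the combined smoothing $Y_{(\alpha,t)}$, invoke Lemma \ref{lema 4.1 bruna para a fibra} to make the generic linear function Morse on it, choose disjoint Milnor discs $D_i$ around the $z_i\in S(Y_t)$ and $E_j$ around the $y_j\in S(p|_{Y_t^0})$, and split the count $\sharp S(p|_{Y_{(\alpha,t)}})=m_{d-1}(Y,0)$ accordingly. The paper's proof is considerably terser and does not spell out the local-versus-global genericity issue you flag at the end; your observation that this is handled by intersecting finitely many Zariski-open conditions (one for each $z_i$) is exactly the implicit justification, and the only cosmetic difference is that the paper phrases the Morse condition via a linear deformation $p_a$ of $p$ rather than by moving $(\alpha,t)$ inside $W_0$.
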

	
	\begin{proof} We choose a Milnor ball $B_{\epsilon}$ of $Y$ at the origin, so in $B_{\epsilon}$ the origin is the only singular point of $Y$.
		We denote by $z_1,\ldots,z_k$ and $y_1,\ldots,y_l$ the singular points of $Y_t$ and $p|_{Y_t^0}$, respectively.
		For each $i=1,\ldots,k$ and $j=1,\ldots,l$ we choose also Milnor discs $D_i$ for $Y_t,$ at $z_i$ and $E_j$  for $p|_{Y_t^0}$ at $y_j$ such that these discs are pairwise disjoint and are contained in $B_\epsilon$.

		Let $p_a$ be a generic linear deformation of $p$ such that $p_a|_{Y_{(\alpha,t)}}$ is a Morse function (there is such $p_a$ by the proof of Lemma \ref{lema 4.1 bruna para a fibra}). 
		Thus, \begin{eqnarray*}
			m_{d-1}(Y,0)&=&\sharp S(p_{a}|_{Y_{(\alpha,t)}})\\
			&=&\displaystyle\sum_{j=1}^l\sharp S(p_{a}|_{Y_{(\alpha,t)}\cap E_j})+\sum_{i=1}^k\sharp S(p_{a}|_{Y_{(\alpha,t)}\cap D_i})\\
			&=&\displaystyle\sum_{j=1}^l\mu(p|_{Y_t},y_j)+\sum_{i=1}^k m_{d-1}(Y_t,z_i).			
		\end{eqnarray*} \end{proof}
	
Let $(\mathcal{X},0)$ be a $(d+1)$-dimensional variety and $\pi\colon(\mathcal{X},0)\rightarrow (\mathbb{C},0)$. Following \cite{Teissier} we can consider the relative polar multiplicities, $m_i(\mathcal{X},\pi,0)$, defined as the multiplicity of the relative polar variety $\overline{\{(t,x) \, | \, x\in S(p|_{X_t^0})\}}$, where $p\colon\mathbb{C}^N\to\mathbb{C}^{d-i+1}$ is a generic linear projection and $i=0,\ldots,d$.

\begin{lem} \label{multiplicidade polar relativa}  If $(\mathcal{Y},0)=(\mathcal{X}\cap F^{-1}(\mathbb{C}\times \{0\}),0)$ is a good family and $m_{d-1}(Y_t,0)$ is constant, then $m_{d-1}(\mathcal{Y},\pi,0)=0$, where $\pi\colon(\mathcal{Y},0)\rightarrow (\mathbb{C},0)$ is the projection $\pi(t,x)=t$.
	
\end{lem}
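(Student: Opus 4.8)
The plan is to derive the statement directly from the conservation of number for the top polar multiplicity of the fibre, Lemma \ref{formula para a m}, once the hypotheses are fed into it.

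First I would fix a representative of the good family $(\mathcal Y,0)$ on a polydisc $D\times U$ for which $Y_t\setminus\{0\}$ is smooth for every $t\in D$; thus $S(Y_t)=\{0\}$ and $Y_t^0=Y_t\setminus\{0\}$. Since $(\mathcal Y,0)$ has dimension $d$ with fibres of dimension $d-1$, the relative polar multiplicity $m_{d-1}(\mathcal Y,\pi,0)$ is, by definition, the multiplicity at the origin of the relative polar variety obtained by taking a generic linear form $p\colon\mathbb C^N\to\mathbb C$ and forming the closure in $\mathcal Y$ of $\{(t,x)\,:\,t\neq 0,\ x\in S(p|_{Y_t^0})\}$. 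I would choose $p$ generic enough to compute both this relative polar multiplicity and $m_{d-1}(Y,0)=\sharp S(p|_{Y_\alpha})$, with $\alpha$ generic and $Y_\alpha=X_A\cap f_b^{-1}(e)$. Applying Lemma \ref{formula para a m} with this $p$ gives, for all small $t$,
$$m_{d-1}(Y,0)=\sum_{y\in S(p|_{Y_t^0})}\mu(p|_{Y_t},y)+m_{d-1}(Y_t,0),$$
since $S(Y_t)=\{0\}$ reduces the sum over the singular locus of $Y_t$ to the single term $m_{d-1}(Y_t,0)$.

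Next I would exploit the constancy hypothesis: since $m_{d-1}(Y_t,0)$ is constant on $t\in D$, its value is the one attained at $t=0$, which is $m_{d-1}(Y_0,0)=m_{d-1}(Y,0)$ because $Y_0=Y$. Cancelling this term leaves $\sum_{y\in S(p|_{Y_t^0})}\mu(p|_{Y_t},y)=0$, and as each local Milnor number of a critical point is at least $1$, this forces $S(p|_{Y_t^0})=\emptyset$ for every $t\neq 0$ sufficiently small. Consequently the set $\{(t,x)\,:\,t\neq 0,\ x\in S(p|_{Y_t^0})\}$ is empty near the origin, so the relative polar variety has no component dominating the base $D$, and therefore $m_{d-1}(\mathcal Y,\pi,0)=0$.

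The argument has essentially no hard step, because the real content is already packaged in Lemma \ref{formula para a m}; the points that need care are purely bookkeeping: that the goodness of $(\mathcal Y,0)$ yields $S(Y_t)=\{0\}$ so that the singular-locus sum collapses to one term, that the constant value of $m_{d-1}(Y_t,0)$ is read off at $t=0$ and equals $m_{d-1}(Y,0)$, and that an empty relative polar locus over the punctured disc forces the relative polar multiplicity to vanish by the usual convention. This is the exact fibre analogue of the proof of the corresponding statement for $m_d(\mathcal X,\pi,0)$ in \cite{Bruna 3}.
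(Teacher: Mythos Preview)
Your proof is correct and follows essentially the same route as the paper: apply Lemma \ref{formula para a m}, use goodness of $(\mathcal Y,0)$ to collapse the singular-locus sum to the single term $m_{d-1}(Y_t,0)$, cancel via the constancy hypothesis to obtain $S(p|_{Y_t^0})=\emptyset$, and conclude that the relative polar variety is empty so its multiplicity vanishes. The only cosmetic difference is that the paper writes the last step via $P(t,x)=(t,p(x))$ and the identity $m_{d-1}(\mathcal Y,\pi,0)=m_0(\overline{S(P|_{\mathcal Y^0})},0)=m_0(\emptyset,0)=0$, whereas you phrase it as ``no component dominating the base''.
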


\begin{proof} By Lemma \ref{formula para a m},  $$m_{d-1}(Y,0)=\displaystyle\sum_{y\in S(p|_{Y_t^0})}\mu(p|_{Y_t},y)+m_{d-1}(Y_t,0).$$ Since, $m_{d-1}(Y_t,0)$ is constant we get $\displaystyle\sum_{y\in S(p|_{Y_t^0})}\mu(p|_{Y_t},y)=0$. Hence $\mu(p|_{Y_t},y)=0$ for all $y\in S(p|_{Y_t^0})$ and we get $S(p|_{Y_t^0})=\emptyset$. We set $P(t,x)=(t,p(x))$ and we have $$m_{d-1}(\mathcal{Y},\pi,0)=m_0(\overline{S(P|_{\mathcal{Y}^0})},0)=m_0(\overline{\{(t,x) \, | \, x \in S(p|_{Y_t^0})\}},0)=m_0(\emptyset,0)=0.$$ \end{proof}
	 
	\begin{teo} \label{multiplicidade implica whitney} If $(\mathcal{Y},0)=(\mathcal{X}\cap F^{-1}(\mathbb{C}\times \{0\}),0)$ is a good family, then $m_k(Y_t,0)$, $k=0,\ldots,d-1$, are constant on $t\in D$ if and only if the pair $(\mathcal{Y}\setminus(\mathbb{C}\times\{0\}),\mathbb{C}\times\{0\})$ satisfies the Whitney conditions. \end{teo}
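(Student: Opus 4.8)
The plan is to follow the proof of the corresponding statement for families of IDS, \cite[Theorem 5.3]{Bruna 3}, with the $(d-1)$-dimensional nested determinantal fibre $(Y_t,0)$ in the role played there by $(X_t,0)$; the two implications are proved separately.

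For ``Whitney $\Rightarrow$ constancy'': since $(\mathcal{Y},0)$ is a good family, the stratification $\{\mathcal{Y}\setminus(\mathbb{C}\times\{0\}),\mathbb{C}\times\{0\}\}$ has smooth strata on which $\pi$ is submersive (on the big stratum because the good-family hypothesis forces it to be smooth with smooth fibres of the expected dimension $d-1$), so if it is Whitney-regular the Thom--Mather first isotopy theorem provides a topological trivialisation of the family; the same applies to a generic linear section, since generic linear sections of Whitney stratifications are again Whitney-stratified. Hence, by the (nested determinantal) analogue of \cite[Corollary 4.4]{Bruna 3} together with the L\^e--Greuel formula (\ref{Le-Greuel}), the vanishing Euler characteristic $\nu(Y_t,0)$ and those of all the iterated generic sections of $(Y_t,0)$ are constant on $t$; this is equivalent to the constancy of the polar multiplicities $m_i(Y_t,0)$, $i=0,\dots,d-1$.

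For ``constancy $\Rightarrow$ Whitney'': the argument runs parallel to \cite[Theorem 5.3]{Bruna 3}. The essential new input is Lemma \ref{multiplicidade polar relativa}, which shows that constancy of the top polar multiplicity $m_{d-1}(Y_t,0)$ forces the top relative polar multiplicity to vanish, $m_{d-1}(\mathcal{Y},\pi,0)=0$ --- equivalently $S(p|_{Y_t^0})=\emptyset$ near the origin for a generic linear function $p$. For $k=0,\dots,d-2$ the $m_k(Y_t,0)$ are the ordinary polar multiplicities of the $(d-1)$-dimensional variety $Y_t$, and their constancy forces the lower relative polar multiplicities $m_k(\mathcal{Y},\pi,0)$ to vanish by the classical specialisation theory of polar varieties (cf. \cite{Le Teissier,Teissier}), exactly as in \cite{Bruna 3}. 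Once all the $m_k(\mathcal{Y},\pi,0)$ vanish, the Whitney conditions for $(\mathcal{Y}\setminus(\mathbb{C}\times\{0\}),\mathbb{C}\times\{0\})$ follow from Teissier's criterion in the case of a smooth big stratum \cite{Teissier}, which relates these conditions along $\mathbb{C}\times\{0\}$ to the constancy of the local polar multiplicities of the total space $\mathcal{Y}$; the latter are now constant by the standard specialisation relations expressing $m_k(\mathcal{Y},c)$ in terms of $m_k(Y_c,0)$ and $m_k(\mathcal{Y},\pi,0)$.

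The main obstacle is precisely that $(Y_t,0)$ is not a determinantal singularity in the usual sense but a ``nested'' one, so that Gaffney's determinantal Whitney criterion \cite{Gaffney definicao de multiplicidade} cannot be applied to it directly; the technical content of the proof is the reduction to the relative-polar-variety formulation, where the new invariant $m_{d-1}(Y,0)$ and Lemmas \ref{formula para a m} and \ref{multiplicidade polar relativa} play exactly the role that $m_d(X_t,0)$ and \cite[Lemma 5.1]{Bruna 3} play in the determinantal case. A secondary point that must be checked carefully is that the Thom--Mather trivialisation really yields the constancy of $\nu(Y_t,0)$, which relies on the good-family hypothesis guaranteeing that $\pi$ restricts to a submersion on the big stratum.
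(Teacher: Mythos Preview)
Your ``constancy $\Rightarrow$ Whitney'' direction is essentially the paper's argument: Lemma \ref{multiplicidade polar relativa} gives $m_{d-1}(\mathcal{Y},\pi,0)=0$, and the paper then invokes \cite[Corollary 5.12]{Gaffney 1} directly (which packages the Teissier-type criterion you spell out). So this half is fine.

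The ``Whitney $\Rightarrow$ constancy'' direction, however, diverges from the paper and contains a real gap. You argue: Thom--Mather gives topological triviality of $(\mathcal{Y},0)$ and of its generic sections, then a ``nested determinantal analogue of \cite[Corollary 4.4]{Bruna 3}'' yields constancy of $\nu(Y_t,0)$ and of the $\nu$ of all iterated sections, whence constancy of the $m_k(Y_t,0)$ via L\^e--Greuel. The middle step is the problem: no such analogue is established anywhere in the paper, and it is not immediate. The only conservation statement available for $\nu$ of the fibre, Lemma \ref{consequencia do cor 4.4 bruna}, assumes that $(\mathcal{X},0)$ --- not $(\mathcal{Y},0)$ --- is good and topologically trivial, which is not among the hypotheses here. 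Proving ``$(\mathcal{Y},0)$ good and topologically trivial $\Rightarrow\nu(Y_t,0)$ constant'' would require redoing the fibration arguments of Lemmas \ref{imitando o Teorema 3.4 Bruna}--\ref{imitando o Lema 4.2 Bruna} in this setting, and then iterating for every generic linear slice; you give no indication of how this goes.

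The paper sidesteps all of this. From the Whitney hypothesis it first invokes Teissier \cite{Teissier} to get constancy of the \emph{relative} polar multiplicities $m_k(\mathcal{Y},\pi,(t,0))$ for $k=1,\dots,d-1$. A short dimension argument --- since $(\mathcal{Y},0)$ is good, $(t,0)\notin S(P|_{\mathcal{Y}^0})$, so $m_{d-1}(\mathcal{Y},\pi,0)\ne0$ would force $D\times\{0\}$ into the $0$-dimensional set $\overline{S(P|_{\mathcal{Y}^0})}\setminus S(P|_{\mathcal{Y}^0})$ --- then gives $m_{d-1}(\mathcal{Y},\pi,0)=0$, hence $S(p|_{Y_t^0})=\emptyset$, and Lemma \ref{formula para a m} immediately yields $m_{d-1}(Y_t,0)=m_{d-1}(Y,0)$. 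For $k=0,\dots,d-2$ the $m_k(Y_t,0)$ are the ordinary polar multiplicities of a $(d-1)$-dimensional variety, and their constancy follows directly from \cite[Theorem 5.6]{Gaffney 1}; no passage through topological triviality or $\nu^*$ is needed.
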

	
	\begin{proof} Assume that $m_k(Y_t,0)$, $k=0,\ldots,d-1$, are constant on $t\in D$. By Lemma \ref{multiplicidade polar relativa}, $m_{d-1}(\mathcal{Y},\pi,0)=0$. Thus by \cite[Corollary 5.12]{Gaffney 1} we get that $(\mathcal{Y}\setminus(\mathbb{C}\times\{0\}),\mathbb{C}\times\{0\})$ satisfies the Whitney conditions. 
		
	Conversely, assume that $(\mathcal{Y}\setminus(\mathbb{C}\times\{0\}),\mathbb{C}\times\{0\})$ satisfies the Whitney conditions. From the results of Teissier \cite{Teissier}, $m_k(\mathcal{Y},\pi,(t,0))$ is constant on $t\in D$ for $k=1,\ldots,d-1$. In particular, $m_{d-1}(\mathcal{Y},\pi,(t,0))$ is constant on $t\in D$. 
	
	Suppose that $m_{d-1}(\mathcal{Y},\pi,(t,0))=m_{d-1}(\mathcal{Y},\pi,0)\neq 0$ for $t\in D$.
	By assumption, $Y_t^0=Y_t\setminus\{0\}$, so  $(t,0)\notin S(P|_{\mathcal{Y}^0})$, where $P(t,x)=(t,p(x))$. We deduce that 
	$$
	T=D\times\{0\}\subset \overline{S(P|_{\mathcal{Y}^0})}\setminus S(P|_{\mathcal{Y}^0}),
	$$ 
	which is not possible because the set on the right hand side is $0$-dimensional. Hence, $m_{d-1}(\mathcal{Y},\pi,(t,0))=m_{d-1}(\mathcal{Y},\pi,0)= 0$ for $t\in D$.
	 Therefore $$m_{d-1}(\mathcal{Y},\pi,0)=m_0(\overline{S(P|_{\mathcal{Y}^0})},0)=m_0(\overline{\{(t,x) \, | \, x \in S(p|_{Y_t^0})\}},0)=0.$$
	 In other words, $S(P|_{\mathcal{Y}^0})=\emptyset$ and thus, $S(p|_{Y_t^0})=\emptyset$. Therefore $\displaystyle\sum_{y\in S(p|_{Y_t^0})}\mu(p|_{Y_t},y)=0$. By Lemma \ref{formula para a m} we conclude that $$m_{d-1}(Y,0)=m_{d-1}(Y_t,0).$$ Hence $m_{d-1}(Y_t,0)$ is constant.
	 For $k=0,\ldots,d-2$, $m_k(Y_t,0)$ are constant by \cite[Theorem 5.6]{Gaffney 1}.	\end{proof}

\begin{teo}\label{main} The family $F$ is Whitney equisingular if and only if $(\mathcal{X},0)$ is a good family, $m_i(X_t,0)$, $i=0,\ldots,d$, and $m_k(Y_t,0)$, $k=0,\ldots,d-1$, are constant on $t\in D$. \end{teo}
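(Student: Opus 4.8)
The plan is to prove the equivalence by decomposing the Whitney equisingularity of $F$ into the three pairs of strata that must satisfy Whitney conditions plus the Thom condition $A_F$, and to match each requirement with the constancy of one block of polar multiplicities. First I would assume $F$ is Whitney equisingular. Then $(\mathcal X,0)$ is good by definition, and the substratum $(\mathcal X\setminus F^{-1}(T), S)$ together with $(\mathcal X\setminus F^{-1}(T), F^{-1}(T)\setminus S)$ forces $(\mathcal X\setminus T,T)$ to be Whitney, hence $(\mathcal X,0)$ is Whitney equisingular as a family of IDS; by \cite[Theorem 5.3]{Bruna 3} this gives that $m_i(X_t,0)$, $i=0,\dots,d$, are constant. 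For the fiber multiplicities, the regularity of the pair $(F^{-1}(T)\setminus S, S)$ says precisely that $(\mathcal Y\setminus(\C\times\{0\}),\C\times\{0\})$ satisfies the Whitney conditions, where $\mathcal Y=F^{-1}(T)=\mathcal X\cap F^{-1}(\C\times\{0\})$; note that goodness of $F$ makes $(\mathcal Y,0)$ a good family, so Theorem \ref{multiplicidade implica whitney} applies and yields that $m_k(Y_t,0)$, $k=0,\dots,d-1$, are constant. This finishes the forward direction.

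For the converse, assume $(\mathcal X,0)$ is good and all the $m_i(X_t,0)$ and $m_k(Y_t,0)$ are constant. By Corollary \ref{multiplicidades constantes implicam F cte} the family $F$ is $\mu$-constant, hence by Corollary \ref{teo final} it is good, so there is a representative on $D\times U$ with $X_t\setminus\{0\}$ smooth and $f_t$ regular on $X_t\setminus\{0\}$; in particular $S(X_t)=S(f_t)=\{0\}$, so the natural stratification $\mathcal A=\{\mathcal X\setminus F^{-1}(T),\,F^{-1}(T)\setminus S,\,S\}$, $\mathcal A'=\{(\C\times\C)\setminus T,\,T\}$ is well defined and $F$ carries each stratum submersively onto a stratum of $\mathcal A'$. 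Now I check the three pairs. The pair $(\mathcal X\setminus F^{-1}(T),S)$: this is an open subset of $\mathcal X^0=\mathcal X\setminus(\C\times\{0\})$, and constancy of $m_i(X_t,0)$, $i=0,\dots,d$, gives that $(\mathcal X^0,\C\times\{0\})$ is Whitney (this is the content of \cite[Theorem 5.3]{Bruna 3} / the criterion used in the $s=1$ theorem above), hence so is the open restriction. The pair $(F^{-1}(T)\setminus S,S)$: by Theorem \ref{multiplicidade implica whitney}, constancy of $m_k(Y_t,0)$, $k=0,\dots,d-1$, together with $(\mathcal Y,0)$ being good (which follows from goodness of $F$), gives that $(\mathcal Y\setminus(\C\times\{0\}),\C\times\{0\})$ satisfies the Whitney conditions. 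The pair $(\mathcal X\setminus F^{-1}(T),F^{-1}(T)\setminus S)$: at each point of $F^{-1}(T)\setminus S$ the big stratum is an open subset of the $(d+1)$-dimensional manifold $\mathcal X^0$, which locally contains all secant directions, so the Whitney conditions hold trivially, exactly as in the proof of the $s=1$ theorem. Finally, the Thom condition $A_F$: along $S$ and along $F^{-1}(T)\setminus S$ one has $F|_S(t,0)=(t,0)$ with $\ker d(F|_S)=0$, and on $\mathcal X^0$ the map $F$ is a submersion, so $A_F$ holds for all relevant pairs. Hence $\mathcal A,\mathcal A'$ is a regular stratification and $F$ is Whitney equisingular.

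The main obstacle, and the step that actually carries the novelty of the paper, is the middle pair $(F^{-1}(T)\setminus S, S)$: everything there rests on Theorem \ref{multiplicidade implica whitney}, which in turn relies on the whole chain of lemmas setting up $m_{d-1}(Y_t,0)$ — the conservation formula Lemma \ref{formula para a m}, the vanishing of the relative polar multiplicity Lemma \ref{multiplicidade polar relativa}, and Gaffney's criterion \cite[Corollary 5.12]{Gaffney 1} applied to the nested determinantal family $\mathcal Y$. One subtle point to address carefully is that $\mathcal Y$ is not an IDS of the usual type but a family of fibers, so one must make sure the hypotheses of \cite{Gaffney 1} (in particular that $\mathcal Y\setminus(\C\times\{0\})$ is smooth of pure dimension $d$) are genuinely satisfied — this is where goodness of $F$, and the fact that $f_t$ is regular off $0$, gets used. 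The remaining pairs and the Thom condition are essentially formal, following the pattern already displayed in the proof of the $s=1$ theorem above, so I would treat them briefly and concentrate the writing on invoking Theorem \ref{multiplicidade implica whitney} and \cite[Theorem 5.3]{Bruna 3} correctly.
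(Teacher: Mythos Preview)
Your proposal is essentially correct and follows the same strategy as the paper: reduce to the three Whitney pairs $(A,C)$, $(B,C)$, $(A,B)$ plus Thom's $A_F$, and match $(A,C)$ with \cite[Theorem~5.3]{Bruna 3}, $(B,C)$ with Theorem~\ref{multiplicidade implica whitney}, and treat $(A,B)$ and $A_F$ as formal. The converse direction in your write-up coincides with the paper's proof almost verbatim.

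One point in the forward direction deserves more care. You write that the pairs $(\mathcal X\setminus F^{-1}(T),\,S)$ and $(\mathcal X\setminus F^{-1}(T),\,F^{-1}(T)\setminus S)$ together force $(\mathcal X\setminus S,\,S)$ to be Whitney. This is not quite the right combination: the pair $(A,B)$ plays no role here. What you actually need is $(A,C)$ together with $(B,C)$, and the argument is not entirely formal. For a sequence $(t_i,x_i)\to(t,0)$ in $\mathcal X^0=A\cup B$ with $T_{(t_i,x_i)}\mathcal X^0\to K$ and secants $\to L$, you must pass to a subsequence lying either in $A$ or in $B$. In the first case $T_{(t_i,x_i)}A=T_{(t_i,x_i)}\mathcal X^0$ because $A$ is open in $\mathcal X^0$, and $(A,C)$ Whitney gives $L\subset K$. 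In the second case $B$ is only a \emph{submanifold} of $\mathcal X^0$, so you take a further subsequence with $T_{(t_i,x_i)}B\to M$; then $(B,C)$ Whitney gives $L\subset M$, and the inclusion $T_{(t_i,x_i)}B\subset T_{(t_i,x_i)}\mathcal X^0$ passes to the limit to give $M\subset K$, hence $L\subset K$. The paper spells this out explicitly; you should too, and correct the pair you invoke from $(A,B)$ to $(B,C)$.
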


\begin{proof} Assume, first, that the family $F$ is Whitney equisingular. So $F$ is good, that is, there is a representative defined in $D\times U$, where $D$ and $U$ are open neighbourhoods of the origin in $\mathbb{C}$ and $\mathbb{C}^N$ respectively, such that $X_t\setminus \{0\}$ is smooth and $f_t$ is regular on $X_t\setminus \{0\}$, for any $t\in D$. Therefore the families $(\mathcal{X},0)$ and $(\mathcal{Y},0)$ are good too, where $\mathcal{Y}=\mathcal{X}\cap F^{-1}(\mathbb{C}\times \{0\})$.

	This representative of $F$ admits a regular stratification given by $$\mathcal{A}=
	\{\mathcal{X}\setminus \mathcal{Y},\mathcal{Y}\setminus (D\times\{0\}),D\times\{0\}\}$$ in the source. We set $A=\mathcal{X}\setminus \mathcal{Y}, B=\mathcal{Y}\setminus (D\times\{0\}), C=D\times\{0\}$.
	
	Since $\mathcal{A}$ satisfies the Whitney conditions then so does $(B,C)$. Thus, by Theorem \ref{multiplicidade implica whitney}, $m_k(Y_t,0)$, $k=0,\ldots,d-1$, are constant on $t\in D$.
	
	Moreover the pair $(\mathcal{X}\setminus C,C)$ satisfies the Whitney conditions. In fact, let $\{(t_i,x_i)\}$ be a sequence of points of $\mathcal{X}\setminus C$ and $\{(s_i,0)\}$ a sequence of points of $C$ such that $\{(t_i,x_i)\}\rightarrow (t,0)$, $\{(s_i,0)\}\rightarrow (t,0)$, $T_{(t_i,x_i)}(\mathcal{X}\setminus C)\rightarrow K$ and $sec((t_i,x_i),(s_i,0))\rightarrow L$, where $(t,0)\in C$.
	
	Since $\mathcal{X}\setminus C=(\mathcal{X}\setminus \mathcal{Y})\cup(\mathcal{Y}\setminus C)$, then there is a subsequence $\{(t_{i_k},x_{i_k})\}$ contained in $\mathcal{X}\setminus \mathcal{Y}$ or in $\mathcal{Y}\setminus C$.
	
	In the first case, $T_{(t_{i_k},x_{i_k})}(\mathcal{X}\setminus C)=T_{(t_{i_k},x_{i_k})}(\mathcal{X}\setminus \mathcal{Y})$ because $\mathcal{X}\setminus \mathcal{Y}$ is an open set in $\mathcal{X}\setminus C$. Since $(\mathcal{X}\setminus \mathcal{Y},C)$ satisfies the Whitney conditions, then $L\subseteq K$.
	
	Now, if $\{(t_{i_k},x_{i_k})\}$ is in $\mathcal{Y}\setminus C$, after taking again a subsequence if necessary, we can assume that $T_{(t_{i_k},x_{i_k})}(\mathcal{Y}\setminus C)$ has a limit $M$. Since $\mathcal{Y}\setminus C$ is a submanifold of $\mathcal{X}\setminus C$, we have $T_{(t_{i_k},x_{i_k})}(\mathcal{Y}\setminus C)$ is contained in $T_{(t_{i_k},x_{i_k})}(\mathcal{X}\setminus C)$, for all $k$. Considering the limits of these tangents spaces, we conclude that $M$ is contained in $K$. Moreover, the pair $(\mathcal{Y}\setminus C,C)$ satisfies the Whitney conditions and thus, $L\subseteq M\subseteq K$.

	Conversely, if all the polar multiplicities are constant, then $F$ is $\mu$-constant by Corollary \ref{multiplicidades constantes implicam F cte}. Moreover, $(\mathcal{X},0)$ is good. So $F$ is good by Corollary \ref{teo final}. Therefore there is a representative defined in $D\times U$, where $D$ and $U$ are open neighbourhoods of the origin in $\mathbb{C}$ and $\mathbb{C}^N$ respectively, such that $X_t\setminus \{0\}$ is smooth and $f_t$ is regular on $X_t\setminus \{0\}$, for any $t\in D$.

	This representative of $F$ admits a stratification given by $$\mathcal{A}=
	\{\mathcal{X}\setminus \mathcal{Y},\mathcal{Y}\setminus (D\times\{0\}),D\times\{0\}\}$$ in the source and $$\mathcal{A'}=\{(\mathbb{C}\times\mathbb{C})\setminus(\mathbb{C}\times\{0\}),\mathbb{C}\times\{0\}\}$$ in the target, where $\mathcal{Y}=\mathcal{X}\cap F^{-1}(\mathbb{C}\times \{0\})$. We set $A=\mathcal{X}\setminus \mathcal{Y}, B=\mathcal{Y}\setminus (D\times\{0\}), C=D\times\{0\}, A'=(\mathbb{C}\times\mathbb{C})\setminus(\mathbb{C}\times\{0\})$ and $B'=\mathbb{C}\times\{0\}$.
	
	We remark that $F$ takes each stratum in $\mathcal{A}$ to a stratum of $\mathcal{A'}$ submersively. 
Moreover $A$ is an open set in $\mathcal{X}^0$, where $\mathcal{X}^0=\mathcal{X}\setminus (\mathbb{C}\times\{0\})$. Since $(\mathcal{X},0)$ is good and $m_i(X_t,0)$, $i=0,\ldots,d$, are constant on $t\in D$, then $(\mathcal{X}^0,C)$ satisfies the Whitney conditions (see \cite[Theorem 5.3]{Bruna 3}). Hence $(A,C)$ also satisfies the Whitney conditions. 
	
Since $F$ is good then so is $(\mathcal{Y},0)$ and $m_k(Y_t,0)$, $k=0,\ldots,d-1$, are constant on $t\in D$. Hence, the pair $(B,C)$ satisfies the Whitney conditions by Theorem \ref{multiplicidade implica whitney}.
	
	We prove now that the pair $(A,B)$ satisfies the Whitney conditions. For each $(t,y)\in B$ there is a diffeomorphism $\varphi\colon U\to U'$, where $U$ and $U'$ are open neighbourhoods in $\mathbb{C}^{d+1}$ and $\mathcal{X}\setminus (\mathbb{C}\times\{0\})$ respectively with $(t,y)\in U$ and $d$ is dimension of $X$. The pair  $(\varphi^{-1}(A\cap U'),\varphi^{-1}(B\cap U'))$ satisfies the Whitney conditions because $\varphi^{-1}(A\cap U')$ is an open set in $\mathbb{C}^{d+1}$ and thus $\varphi^{-1}(A\cap U')$ contains all the secants. Therefore, $(A,B)$ satisfies the Whitney equisingularity conditions (see \cite[Lemma 2.2]{Mather}).

	It is easy to prove that $(A,C)$ and $(B,C)$ satisfy Thom's condition $A_F$ because $F|_{C}(t,0)=(t,0)$ and therefore $\ker(d(F|_{C})(t,0))=\{0\}$.
	And the pair $(A,B)$ satisfies Thom's condition $A_F$ because $F$ is a submersion on $\mathcal{X}^0$.                                                                                                                                                                                                                                                                                                                                                                                                                                                                                                                                                                                                                                                                                                                                                                                                                                                                                                                                                                                                                                                                                                                                                                 
	Hence $F$ is Whitney equisingular. \end{proof}

By \cite[Definition 1.5]{Teissier} we have $$\nu^*(X_t,0):=(\nu_0(X_t,0),\ldots,\nu_d(X_t,0))$$ where $\nu_j(X_t,0)=\nu(X\cap H^{N-(d-j)},0)$ with $\nu(X\cap H^{N-(d-j)},0)$ is the vanishing Euler characteristic of the IDS $X\cap H^{N-(d-j)}$ in the generic hyperplane $H^{N-(d-j)}$.

By \cite[Remark 1.6]{Teissier}, $\nu_0(X_t,0)=m_0(X_t,0)-1$. Furthermore, for $i>0$ $m_i(X_t,0)=\nu_i(X_t,0)+\nu_{i-1}(X_t,0)$. Thus, $m_i(X_t,0)$ are constant if and only if $\nu^*(X_t,0)$ is constant.

\appendix

\section{}

Let $W$ be an ideal in $\mathcal{O}_N$ generated by $g_1,\ldots,g_r$ and $h=(h_1,\ldots,h_p)\colon(\mathbb{C}^N,0)\rightarrow \mathbb{C}^p$ a map germ. For each $m=1,\ldots,N$ Nuño-Ballesteros and Saia \cite{Juanjo e Saia} define the Jacobian extension of $\rank m$ of $(h,W)$ as $$\Delta_m(h,W)=W+W',$$ where $W'$ is the ideal generated by the minors of order $m$ of the Jacobian matrix of $(h_1,\ldots,h_p,g_1,\ldots,g_r)$. Following this, we define inductively the iterated Jacobian extension of $h$ by $$J_i(h,W)=\left\{
\begin{array}{llllllll}
\Delta_{N-i_1+1}(h,W), {\text{ if $k=1$,}}\\
\Delta_{N-i_k+1}(h,J_{i_1,\ldots,i_{k-1}}(h,W)), \, \, \mbox{if} \, \, $k$ \, \, \mbox{is bigger than} \, \, $1$,
\end{array} 
\right.$$ where $i=(i_1,\ldots,i_k)$ is a Boardman symbol (i.e. $N\geq i_1\geq\ldots\geq i_k\geq 0$). By \cite[Lemma 2.2]{Juanjo e Saia} $J_i(h,W)$ does not depend on the generators of $W$.

\begin{lem}\label{conjunto analitico} Let $Y=\varphi^{-1}(0)\subset \mathbb{C}^N$ be an analytic variety of dimension $d$ where $\varphi\colon\mathbb{C}^N\to \mathbb{C}^p$ is a holomorphic map germ. Let $g\colon Y\to \mathbb{C}$ be a holomorphic function germ. The set of points $y\in Y$ such that either $y$ is a singular point of $Y$ or $y$ is a degenerate critical point of $g$ is $$\tilde{C}=v(J_{d,1}(g,\langle\varphi\rangle))\cup S(Y),$$
where $\langle\varphi\rangle$ is the ideal generated by the components of $\varphi$ and $S(Y)$ is the singular locus of $Y$.

\end{lem}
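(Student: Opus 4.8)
The plan is to establish the set-theoretic identity by a purely local computation, reducing it to the behaviour of the $1$-jet and $2$-jet of $g$ along $Y$ and to the unwinding of the iterated Jacobian extension for the Boardman symbol $(d,1)$. First I would record that the right-hand side is automatically contained in $Y$: the iterated Jacobian extension always contains $\langle\varphi\rangle$, so $v(J_{d,1}(g,\langle\varphi\rangle))\subseteq v(\langle\varphi\rangle)=Y$, and $S(Y)\subseteq Y$ by definition; moreover $S(Y)$ is analytic and $v(J_{d,1}(g,\langle\varphi\rangle))$ is the zero set of an ideal, so once the identity is proved it yields at once the analyticity of $\tilde C$ needed in Lemma~\ref{lema 4.1 bruna para a fibra}. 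Since $S(Y)$ occurs in both sides, it suffices to prove that a point $y_0\in Y\setminus S(Y)$ --- a point at which $Y$ is a smooth $d$-dimensional manifold and $\langle\varphi\rangle$ generates its ideal --- is a degenerate critical point of $g|_Y$ if and only if $y_0\in v(J_{d,1}(g,\langle\varphi\rangle))$. Unwinding the definitions, $J_d(g,\langle\varphi\rangle)=\langle\varphi\rangle+I_{N-d+1}(J(g,\varphi_1,\dots,\varphi_p))$ and $J_{d,1}(g,\langle\varphi\rangle)=\Delta_N(g,J_d(g,\langle\varphi\rangle))$, where $g$ is understood through a fixed analytic extension to $\C^N$; at the end I would remark that a different extension alters $dg$ by a combination of the rows $d\varphi_i$ plus terms in $\langle\varphi\rangle$, hence leaves these ideals unchanged.

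Next I would treat the first Jacobian extension. At a smooth point $y_0$ the differentials $d\varphi_1(y_0),\dots,d\varphi_p(y_0)$ span the conormal space $(T_{y_0}Y)^{\perp}$, of dimension $N-d$, so every minor of order $N-d+1$ of the matrix with rows $dg,d\varphi_1,\dots,d\varphi_p$ vanishes at $y_0$ exactly when $dg(y_0)\in(T_{y_0}Y)^{\perp}$, i.e.\ exactly when $y_0$ is a critical point of $g|_Y$. Hence $v(J_d(g,\langle\varphi\rangle))\setminus S(Y)$ is precisely the critical locus of $g|_Y$ inside the smooth part of $Y$.

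Then comes the key step. Fixing such a critical point $y_0$, after reordering I may assume $d\varphi_1(y_0),\dots,d\varphi_{N-d}(y_0)$ are independent and, by the implicit function theorem, choose local analytic coordinates $(u_1,\dots,u_d,w_1,\dots,w_{N-d})$ at $y_0$ with $w_i=\varphi_i$, so that $Y=\{w=0\}$, $\langle\varphi\rangle$ localizes to $\langle w_1,\dots,w_{N-d}\rangle$, and $g|_Y$ is $u\mapsto g(u,0)$. By the generator-independence of the iterated Jacobian extension (\cite[Lemma 2.2]{Juanjo e Saia}) I may compute $\Delta_N$ using the generators $w_1,\dots,w_{N-d}$ together with $G_j:=\partial g/\partial u_j(u,0)$, which by the previous step generate $J_d(g,\langle\varphi\rangle)$ near $y_0$. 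The Jacobian matrix of $(g,w_1,\dots,w_{N-d},G_1,\dots,G_d)$ then has, in these coordinates, the $(N+1)\times N$ block form
\[
\begin{pmatrix}\partial_u g & \partial_w g\\ 0 & I_{N-d}\\ H & 0\end{pmatrix},\qquad H=\left(\frac{\partial^2(g|_Y)}{\partial u_i\,\partial u_j}\right)_{i,j=1}^{d}.
\]
Its minors of order $N$ are obtained by deleting one row: deleting the first row gives $\pm\det H$; deleting a row $dw_i$ gives $\pm(\partial g/\partial w_i)\det H$, a multiple of the former; deleting a row $dG_j$ gives a determinant whose first row is $\partial_u g$, hence an element already in $J_d(g,\langle\varphi\rangle)$. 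Therefore $J_{d,1}(g,\langle\varphi\rangle)=J_d(g,\langle\varphi\rangle)+\langle\det H\rangle$ near $y_0$, so $v(J_{d,1}(g,\langle\varphi\rangle))$ coincides near $y_0$ with $\{w=0,\ \nabla(g|_Y)=0,\ \det H=0\}$, i.e.\ with the degenerate critical points of $g|_Y$. Combined with the first step, $v(J_{d,1}(g,\langle\varphi\rangle))\setminus S(Y)$ is exactly the set of degenerate critical points of $g|_Y$ in $Y\setminus S(Y)$, and adjoining $S(Y)$ yields $\tilde C$.

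I expect the main obstacle to be the bookkeeping in this last step: one has to verify that no minor of order $N$ beyond a unit multiple of $\det H$ contributes a genuinely new generator, and to justify the reduction to the coordinates $(u,w)$, namely that at a smooth point $\langle\varphi\rangle$ already generates the ideal of $Y$ (points where it does not are folded into $S(Y)$, so nothing is lost). Everything else --- the conormal-space description of critical points in the first step, generator-independence, and independence of the chosen extension of $g$ --- is routine once these two points are settled.
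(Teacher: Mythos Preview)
Your argument is correct but follows a genuinely different route from the paper's. The paper does not compute $J_{d,1}$ by hand: at a regular point of $Y$ it passes to the graph $\Gamma(g)=\{(x,g(x)):x\in Y\}\subset\C^{N}\times\C$, writes it as the zero set of $\varphi'(x,s)=(\hat\varphi(x),g(x)-s)$ for a submersive subselection $\hat\varphi$ of the components of $\varphi$, and observes that degenerate critical points of $g|_Y$ correspond under the graph diffeomorphism to degenerate critical points of the projection $\pi(x,s)=s$ on $\Gamma(g)$; it then invokes \cite[Theorem~5.1]{Juanjo e Saia} as a black box to identify the latter with $v(J_{d,1}(\varphi';x))$ and unwinds back to $v(J_{d,1}(g,\langle\varphi\rangle))$. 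You instead unwind the Boardman ideal directly in adapted coordinates $(u,w)$ and reduce the second Jacobian extension to the intrinsic Hessian determinant of $g|_Y$; your case analysis of the three kinds of $N\times N$ minors (giving $\pm\det H$, multiples of $\det H$, and---via cofactor expansion along the row $\partial_u g$---elements already in $J_d$) is accurate. The paper's route is shorter for a reader who already has \cite{Juanjo e Saia} in hand, while yours is self-contained and makes the link between $J_{d,1}$ and the Hessian explicit; both proofs share the same reduction to the smooth locus and the same appeal to the generator-independence lemma \cite[Lemma~2.2]{Juanjo e Saia}.
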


\begin{proof} 

We first assume that $0$ is a regular point of $Y$. By the Jacobian criterion, 
$\varphi$ has rank $p-d$ at $0$, hence there are $\varphi_{i_1},\ldots,\varphi_{i_{p-d}}$ such that $\hat{\varphi}=(\varphi_{i_1},\ldots,\varphi_{i_{p-d}})$ is a submersion. Moreover, $\langle\varphi\rangle=\langle\hat{\varphi}\rangle$ and $\hat{\varphi}^{-1}(0)=Y$ in a neighborhood of the origin. Let 
$$\begin{array}{cccc} \pi\colon & \Gamma(g) & \to & \mathbb{C} \\
	& (x,g(x)) & \mapsto & g(x),
	\end{array}$$ where $\Gamma(g)=\{(x,g(x)) \, \, | \, \, x\in Y\}$ is the graph of $g$.  
	
	We know that $\Gamma(g)=(\varphi')^{-1}(0)$, where $$\begin{array}{cccc} \varphi'\colon & \mathbb{C}^N\times \mathbb{C} & \to & \mathbb{C}^{p-d+1} \\
	& (x,s) & \mapsto & (\hat{\varphi}(x),g(x)-s).
	\end{array}$$

	By \cite[Theorem 5.1]{Juanjo e Saia}, the set of degenerate critical points of $\pi$ in $\Gamma(g)$ is $$
v(J_{d,1}(\varphi';x))=v(\langle {\varphi}' \rangle+I_{N-d+1}(J({\varphi}';x))+I_{N}(J({\varphi}',h';x))),
$$ where $\langle h_1',\ldots,h_l'\rangle=I_{N-d+1}(J(\varphi';x))$ and $h'=(h_1',\ldots,h_l')$. 
The notation $J_i(\cdot;x)$ means that we construct the Jacobian ideals by taking only partial derivatives with respect to the variables $x_1,\dots,x_N$.

	Since $$\begin{array}{cccc} \Gamma\colon & Y & \to & \Gamma(g) \\
	& x & \mapsto & (x,g(x))
	\end{array}$$ is a diffeomorphism, the degenerate critical points of $g$ are the inverse images of the degenerate critical points of $\pi$ by $\Gamma$, which is equal to the analytic set $$v(\langle {\hat{\varphi}} \rangle+I_{N-d+1}(J({\hat{\varphi}},g))+I_{N}(J({\hat{\varphi}},g,h))),$$ 
	where $\langle h_1,\ldots,h_l\rangle=I_{N-d+1}(J(\hat{\varphi},g))$ and $h=(h_1,\ldots,h_l)$. 
	But according to our previous definition, this is equal to the analytic set $v(J_{d,1}(g,\langle\hat{\varphi}\rangle))=v(J_{d,1}(g,\langle\varphi\rangle)).$

The case where $0$ is not regular  follows easily from the regular one, just by adding the singular locus $S(Y)$. \end{proof}

\bibliographystyle{amsplain}

\begin{thebibliography}{10}
	
	
	\bibitem{Daiane} D. A. H. Ament, J. J Nuño-Ballesteros, B. Oréfice-Okamoto and J. N. Tomazella, {\it The Euler obstruction of a function on a determinantal variety and on a curve}, Bull. Braz. Math. Soc. (N.S.) \textbf{47} (2016), No. 3, 955–970.
	
	\bibitem{Arbarello} E. Arbarello, M. Cornalba, P. A. Griffiths and J. Harris, {\it Geometry of algebraic curves}, Vol. I.
	Springer-Verlag, New York, (1985), xvi+386 pp.
	
	\bibitem{Briancon1} J. Briançon, A. Galligo and M. Granger, {\it Déformations équisingulières des germes de courbes gauches réduites}, Mém. Soc. Math. France (N.S.), (1980/81), No. 1, 69 pp.
	



\bibitem{Buchweitz-Greuel} R.-O. Buchweitz and G.-M Greuel, {\it The Milnor number and deformations of complex curve singularities},
Invent. Math. \textbf{58} (1980), No. 3, 241-281. 


\bibitem{Gaffney 1} T. Gaffney, {\it Polar multiplicities and equisingularity of map germs}, Topology \textbf{32} (1993), No. 1, 185-223. DOI: 10.1016/0040-9383(93)90045-w

	
\bibitem{Gaffney definicao de multiplicidade} T. Gaffney, {\it Multiplicities and equisingularity of ICIS germs}, Invent. Math.
	\textbf{123} (1996), No. 2, 209-220.
	
	\bibitem{Grauert} H. Grauert and R. Remmert, {\it Theory of Stein spaces}, Translated from the German by Alan Huckleberry. Reprint of the 1979 translation. Classics in Mathematics. Springer-Verlag, Berlin, (2004), xxii+255 pp.
	

	



\bibitem{Guillemin Pollack} V. Guillemin and A. Pollack, {\it Differential topology}, Prentice-Hall, Inc., Englewood Cliffs, N.J., (1974), xvi+222 pp.

\bibitem{Pfister} T. de Jong and G. Pfister, {\it Local analytic geometry}, Basic theory and applications. Advanced Lectures in Mathematics. Friedr. Vieweg and Sohn, Braunschweig, (2000), xii+382 pp.

\bibitem{Jorge Perez e Saia} V. H. Jorge Perez and M. J. Saia, {\it Euler obstruction, polar multiplicities and equisingularity of map germs in $\mathcal{O}(n,p)$, $n<p$},  Internat. J. Math. \textbf{17} (2006), No. 8, 887–903.




\bibitem{Ehresmann fibration} D. T. Lê, J. J. Nuño-Ballesteros and J. Seade, {\it The topology of the Milnor Fibration}, preprint.
	
	\bibitem{Le Ramanujam} D. T. Lê and C. P. Ramanujam, {\it The invariance of Milnor's number implies the invariance of topological type},
	Amer. J. Math. \textbf{98} (1976), No. 1, 67-78.
	
	\bibitem{Le Teissier} D. T. Lê and B. Teissier, {\it Variétés polaires locales et classes de Chern de variétés singulières}, Ann. of Math. (2), \textbf{114} (1981), No. 3, 457–491.


\bibitem{Mather} J. Mather, {\it Notes on topological stability}, Bull. Amer. Math. Soc. (N.S.), \textbf{49} (2012), No. 4, 475–506.
	
\bibitem{tomazela e juan} J. J. Nuño-Ballesteros and J. N. Tomazella, {\it The Milnor number of a function on a space curve germ},
Bull. Lond. Math. Soc. \textbf{40} (2008), No. 1, 129–138.

	
	
	\bibitem{Bruna 2} J. J. Nuño-Ballesteros, B. Oréfice-Okamoto and J. N. Tomazella, {\it The vanishing Euler characteristic of an isolated determinantal singularity},  Israel J. Math. \textbf{197} (2013), No. 1, 475–495.
	
	\bibitem{Bruna 2 erratum} J. J. Nuño-Ballesteros, B. Oréfice-Okamoto and J. N. Tomazella, {\it Erratum to "The vanishing Euler characteristic of an isolated determinantal singularity''}, Israel J. Math. \textbf{197} (2013), 475–495. Israel J. Math. \textbf{224} (2018), No. 1, 505–512.
	
	
	\bibitem{Bruna 3} J. J. Nuño-Ballesteros, B. Oréfice-Okamoto and J. N. Tomazella, {\it Equisingularity of families of isolated determinantal singularities}, Math. Z. \textbf{289} (2018), No. 3-4, 1409-1425.
	
\bibitem{Juanjo e Saia} J. J. Nuño-Ballesteros and M. J. Saia, {\it Multiplicity of Boardman strata and deformations of map germs}. Glasgow Math. J. \textbf{40} (1998), No. 1, 21–32.
	
	
\bibitem{Parameswaran} A. J. Parameswaran, {\it Topological equisingularity for isolated complete intersection
	singularities}, Compositio Math. \textbf{80} (1991), No. 3, 323-336.

\bibitem{Teissier} B. Teissier, {\it Cycles évanescents: sections planes et conditions de Whitney}, (French) Singularités à Cargèse, (Rencontre Singularités
Géom. Anal., Inst. Études Sci., Cargèse, 1972) Astérisque, Nos. 7 et
8, Soc. Math. France, Paris (1973), 285-362.



\end{thebibliography}
\addcontentsline{toc}{chapter}{Referências Bibliográficas}

\end{document}